\newtheorem{theorem}{Theorem}
\newtheorem{lemma}[theorem]{Lemma}
\newtheorem{corollary}[theorem]{Corollary}
\newtheorem{conjecture}{Conjecture} % Separate numbering.
\theoremstyle{definition}
\newtheorem*{defn}{Definition}
\newcommand{\baraw}{\alpha}
\newcommand{\bbraw}{\beta}
\newcommand{\bcraw}{\gamma}
\newcommand{\ba}{\baraw\phantom{{}+\bbraw+\bcraw}}
\newcommand{\bb}{\phantom{\baraw+{}}\bbraw\phantom{{}+\bcraw}}
\newcommand{\bc}{\phantom{\baraw+\bbraw+{}}\bcraw}
\newcommand{\bab}{\baraw+\bbraw\phantom{{}+\bcraw}}
\newcommand{\bac}{\baraw\phantom{{}+\bbraw}+\bcraw}
\newcommand{\bbc}{\phantom{\baraw+{}}\bbraw+\bcraw}
\newcommand{\babc}{\baraw+\bbraw+\bcraw}
\newcommand{\block}{\mathcal{B}}
\newcommand{\oin}{\in}
\newcommand{\path}{\mathcal{X}}
\newcommand{\R}{\mathbb{R}}
\newcommand{\regina}{\emph{Regina}}
\newcommand{\s}[1]{\mathbf{#1}}
\newcommand{\scount}{\sigma}
\newcommand{\smeanraw}{\overline{\scount}}
\newcommand{\smean}[1]{\smeanraw_{#1}}
\newcommand{\tri}{\mathcal{T}}
\begin{document}

\title{The complexity of the normal surface solution space\footnote{%
    A more detailed version of this paper will be available as
    \emph{Extreme cases in normal surface enumeration} (in preparation).}}
\author{Benjamin A.~Burton}
\date{March 11, 2010}

\maketitle

\begin{abstract}
Normal surface theory is a central tool in algorithmic three-dimensional
topology, and the enumeration of vertex normal surfaces is the
computational bottleneck in many important algorithms.
However, it is not well understood how the number of such surfaces
grows in relation to the size of the underlying triangulation.
Here we address this problem in both theory and practice.
In theory, we tighten the exponential upper bound substantially;
furthermore, we construct pathological triangulations that prove
an exponential bound to be unavoidable.
In practice, we undertake a comprehensive analysis of millions of
triangulations and find that in general the number of vertex normal
surfaces is remarkably small, with strong evidence that our pathological
triangulations may in fact be the worst case scenarios.
This analysis is the first of its kind, and the striking behaviour
that we observe has important implications for the
feasibility of topological algorithms in three dimensions.
\end{abstract}

%%%%%%%%%%%%%%%%%%%%%%%%%%%%%%%%%%%%%%%%%%%%%%%%%%%%%%%%%%%%%%%%%%%%%%%%
%
%   Section:  Introduction
%
%%%%%%%%%%%%%%%%%%%%%%%%%%%%%%%%%%%%%%%%%%%%%%%%%%%%%%%%%%%%%%%%%%%%%%%%

\section{Introduction}

Geometric topology is an inherently algorithmic subject, with
fundamental questions such as the \emph{homeomorphism problem}
(find an algorithm to determine whether two given spaces
are topologically equivalent) and the \emph{identification problem}
(find an algorithm to determine the topological name and/or structure
of a given space).  Three-dimensional topology is of particular
interest, since in lower dimensions such problems become trivial
\cite{massey91}, and in higher dimensions they become unsolvable
\cite{markov60-insolubility}.

Throughout this paper we restrict our attention to \emph{closed
3-manifolds}.  In essence, a closed $3$-manifold is a compact
$3$-dimensional topological space that locally looks like $\R^3$ at
every point.
Much recent progress has been made on algorithms in 3-manifold topology.
For example:
\begin{itemize}
    \item Rubinstein gave an algorithm in 1992 for recognising
    the simplest of all closed 3-manifolds, namely the
    3-sphere \cite{rubinstein95-3sphere,rubinstein97-3sphere};
    this algorithm has been refined several times since
    \cite{burton09-quadoct,jaco03-0-efficiency,thompson94-thinposition}.

    \item In 1995, Jaco and Tollefson gave an algorithm for breaking a
    3-manifold down into a connected sum decomposition (essentially a
    topological ``prime decomposition'')
    \cite{jaco95-algorithms-decomposition}.

    \item Perelman's proof of the geometrisation conjecture in 2002
    finally resolved the general homeomorphism problem for 3-manifolds,
    completing a programme initiated decades earlier by pioneers
    such as Haken \cite{haken62-homeomorphism} and
    Thurston \cite{thurston82-geometrisation}.
    The full homeomorphism algorithm is a fusion of diverse
    and complex components, including both the 3-sphere recognition and
    connected sum decomposition algorithms above.
\end{itemize}

A recurring theme in these algorithms (and many others) is that they rely upon
\emph{normal surface theory}, a tool that allows us to convert difficult
topology problems into simpler linear programming problems.
In particular, we can search for an interesting surface within
a 3-manifold by (i)~constructing a high-dimensional polytope,
(ii)~enumerating the ``admissible'' vertices of this polytope,
and then (iii)~testing each admissible vertex to see whether it
encodes the interesting surface that we are searching for.\footnote{%
Some other algorithms (such as knot genus \cite{hass99-knotnp} and
Heegaard genus \cite{li10-genus})
replace step~(ii) with the more difficult enumeration of
a Hilbert basis for a polyhedral cone, yielding what are
known as \emph{fundamental surfaces}.}

The concept of an ``interesting surface'' depends on the application at
hand.  For instance, in the connected sum decomposition algorithm we
search for embedded spheres within our 3-manifold;
in other algorithms we might search for non-trivial embedded discs
\cite{haken61-knot} or embedded incompressible surfaces \cite{jaco84-haken}.
However, in all of these applications the high-dimensional
polytope and its admissible vertices remain the same.  That is,
the polytope vertex enumeration problem is a \emph{common component}
for all of these topological algorithms and many others besides.

Furthermore, this common vertex enumeration problem is in fact the
computational bottleneck for many of these algorithms
\cite{burton09-quadoct,burton09-ws}.
It is therefore important to improve the efficiency and
understand the complexity of this vertex enumeration problem,
since any improvements or results will have a widespread impact on
computational 3-manifold
topology as a whole.  This impact also extends beyond three
dimensions---for instance, in \emph{4-manifold topology}, to
understand whether a given triangulation represents a 4-manifold
we require all of the complex machinery of 3-sphere recognition as
discussed above.

In general, polytope vertex enumeration is difficult.
The general problem is known to be NP-hard
\cite{dyer83-complexity,khachiyan08-hard},
and the range of available algorithms is matched by a range of
pathological cases that exploit their weaknesses \cite{avis97-howgood-compgeom}.
However, in our context we have two advantages:
\begin{itemize}
    \item We are not dealing with an arbitrary polytope, but rather one
    that derives from the machinery of normal surface theory; this polytope
    is known as the \emph{projective solution space}.  Such polytopes
    have additional constraints on their dimensions and the equalities
    and inequalities that define them.

    \item We do not need to enumerate all vertices of the polytope, but
    only the \emph{admissible vertices}.  These are the vertices
    that satisfy an additional family of non-linear constraints,
    known as the \emph{quadrilateral constraints}.
\end{itemize}

These contextual advantages can be exploited in vertex enumeration
algorithms with great success; see
\cite{burton09-convert,burton09-quadoct,burton10-dd,tollefson98-quadspace}
for details.  Nevertheless, the enumeration problem remains a difficult
one.  In particular, Agol et~al.\ \cite{agol02-knotgenus} show that
determining knot genus---yet another problem that employs normal
surface theory---is in fact NP-complete.

In this paper we concern ourselves with the \emph{complexity} of the
enumeration problem.  More specifically, we focus on the
\emph{number of admissible vertices} of the projective solution space,
which we denote by $\scount$.  This quantity is important for the
following reasons:
\begin{itemize}
    \item The admissible vertex count $\scount$ gives a lower bound for
    the time complexity of vertex enumeration.  Moreover, for the
    quadrilateral-to-standard conversion algorithm (a key component of
    the current state-of-the-art enumeration algorithm), there is
    strong evidence to suggest that the running time is in fact a
    low-degree polynomial in $\scount$ \cite{burton09-convert}.

    \item Each admissible vertex corresponds to a surface in our
    3-manifold upon which we must run some subsequent test.  For some
    problems (such as Hakenness testing \cite{burton09-ws,jaco84-haken})
    this test is extremely expensive, and so the number of admissible
    vertices becomes a critical factor in the overall time complexity.
\end{itemize}

The input for a typical normal surface algorithm is a \emph{3-manifold
triangulation}, formed from $n$ tetrahedra by joining their $4n$ faces
together in pairs.  We call $n$ the \emph{size} of the triangulation;
not only does $n$ represent the complexity of the input, but both the
dimension and the number of facets of the projective solution space are
linear in $n$.

The growth of $\scount$ as a function of $n$ is currently
not well understood.  The only general theoretical bound in the literature
is $\scount \leq 128^n$, proven by Hass et~al.\ \cite{hass99-knotnp};
in the special case of a one-vertex triangulation
this has been improved to $\scount \oin O(15^n)$ \cite{burton10-dd}.
Very little is known about the growth of $\scount$ in practice, though
initial observations suggest that $\scount$ is in fact far smaller
\cite{burton09-convert}.  For example, in the proof that the
Weber-Seifert dodecahedral space is non-Haken (one of the first
significant computer proofs to employ normal surface theory),
a ``typical'' triangulation of size $n=23$ is found to generate just
$\scount=1751$ admissible vertices \cite{burton09-ws}.

In this paper we shed more light on the growth of $\scount$, including
new theoretical bounds and comprehensive practical experimentation.
Following a brief outline of normal surface theory in
Section~\ref{s-prelim}, we present the following results:
\begin{itemize}
    \item In Section~\ref{s-theory} we show that $\scount \oin O(\phi^{7n})$,
    where $\phi$ is the golden ratio
    $(1+\sqrt{5})/2$.  This tightens the general theoretical
    bound on $\scount$ from $128^n$ to just over $O(29^n)$.  We prove this
    by extending McMullen's upper bound theorem
    \cite{mcmullen70-ubt} to show that any convex polytope with
    $k$ facets must have $O(\phi^k)$ vertices.

    We push this bound from the other direction in Section~\ref{s-extreme} by
    constructing an infinite family of 3-manifold triangulations
    for which $\scount = 17^{n/4} + n/4$.  This yields the first known
    family for which $\scount$ is exponential in $n$, and disproves an
    earlier conjecture of the author that $\scount \oin O(2^n)$.
    By extending this family to all $n > 5$ we show that any theoretical upper
    bound must grow at least as fast as
    $\Omega(17^{n/4}) \simeq \Omega(2.03^n)$.

    \item In Section~\ref{s-practice} we build a comprehensive census of
    \emph{all} 3-manifold triangulations of size $n \leq 9$, and measure
    $\scount$ for each of the $\sim 150$~million triangulations that ensue.
    We find a remarkably slow growth rate---for $n > 5$ the
    worst cases are precisely the infinite family above, suggesting that
    the lower limit of $\Omega(17^{n/4}) \simeq \Omega(2.03^n)$
    may in fact be tight.
    In the average case the mean $\smeanraw$ appears to grow even slower,
    with an apparent growth rate of less than $\phi^n$ and a final mean of
    just $\smeanraw \simeq 78.49$ for $n = 9$.

    This analysis is the first of its kind, primarily because the complex
    algorithms and software required for such a comprehensive study did
    not exist until very recently \cite{burton07-nor10,burton09-convert}.
    Previous censuses have focused on restricted classes of
    triangulations (such as minimal triangulations of irreducible
    or hyperbolic manifolds
    \cite{burton07-nor10,callahan99-cuspedcensus,martelli01-or9,matveev05-or11}),
    and previous measurements of $\scount$ have been for isolated or ad-hoc
    collections of cases \cite{burton09-convert,burton09-ws,matsumoto00-fig8}.
\end{itemize}

Throughout this paper we work with Haken's original formulation of
normal surface theory \cite{haken61-knot,haken62-homeomorphism}.
Tollefson defines an alternative formulation called \emph{quadrilateral
coordinates} \cite{tollefson98-quadspace},
which is only applicable for some problems but where the
polytope becomes much simpler.  In quadrilateral coordinates an upper
bound of $\scount \leq 4^n$ can be obtained through an
analysis of \emph{zero sets} \cite{burton10-dd}, but again the growth
rate is found to be significantly slower in practice.  We address
quadrilateral coordinates in detail in the full version of this paper.

%%%%%%%%%%%%%%%%%%%%%%%%%%%%%%%%%%%%%%%%%%%%%%%%%%%%%%%%%%%%%%%%%%%%%%%%
%
%   Section:  Preliminaries
%
%%%%%%%%%%%%%%%%%%%%%%%%%%%%%%%%%%%%%%%%%%%%%%%%%%%%%%%%%%%%%%%%%%%%%%%%

\section{Preliminaries} \label{s-prelim}

Throughout this paper we assume that we are working with a
\emph{3-manifold triangulation of size $n$}.  By this we mean a collection
of $n$ tetrahedra, some of whose $4n$ faces are affinely identified (or
``glued together'') in pairs so that the resulting topological space
is a 3-manifold (possibly with boundary).  If all $4n$ faces are
identified in $2n$ pairs then we obtain a closed 3-manifold; otherwise
we obtain a \emph{triangulation with boundary}, and the unidentified
faces become \emph{boundary faces}.  Unless otherwise specified, all
triangulations in this paper are of closed 3-manifolds.

There is no need for a 3-manifold triangulation to be rigidly
embedded in some larger space---tetrahedra can be ``bent'' or
``stretched''.  Moreover, we allow multiple vertices of the
same tetrahedron to be identified as a result of our face gluings,
and likewise with edges.  This allows us to build
triangulations using very few tetrahedra, which becomes useful for computation.

\begin{figure}[htb]
    \centering
    \includegraphics[scale=0.5]{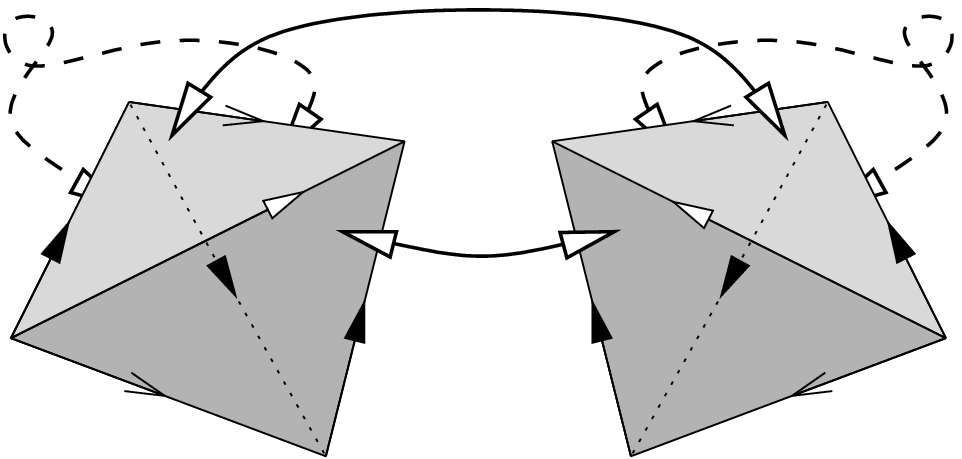}
    \hspace{2cm}
    \includegraphics[scale=0.5]{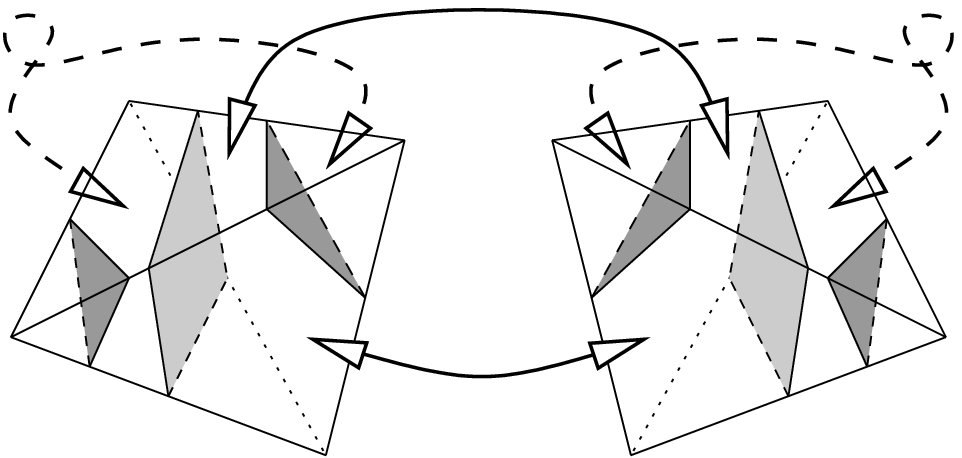}
    \caption{A 3-manifold triangulation and an embedded normal surface}
    \label{fig-s2xs1}
\end{figure}

To illustrate, the left-hand diagram of Figure~\ref{fig-s2xs1} shows a
triangulation of the product space $S^2 \times S^1$ using just
$n=2$ tetrahedra---the back two faces of each tetrahedron are identified
with a twist, and the front two faces of the left tetrahedron are
identified directly with the front two faces of the right tetrahedron.
All eight vertices become identified together, and the 12 edges become
identified in three distinct classes (represented in the diagram by three
different types of arrowhead).  We say that the resulting triangulation has
\emph{one vertex} and \emph{three edges}.

Normal surfaces were introduced by Kneser \cite{kneser29-normal}, and
further developed by Haken \cite{haken61-knot,haken62-homeomorphism}
for use in algorithms.  A \emph{normal surface} is a
2-dimensional surface embedded within a 3-manifold triangulation
that meets each tetrahedron in a (possibly empty) collection of
\emph{triangles} and/or \emph{quadrilaterals}, as illustrated
in Figure~\ref{fig-normaldiscs}.  For example, a normal surface within
our $S^2 \times S^1$ triangulation is shown on the right-hand side of
Figure~\ref{fig-s2xs1}; as a consequence of the tetrahedron gluings,
the six triangles and quadrilaterals join together to form a
2-dimensional sphere.

\begin{figure}[htb]
    \centering
    \includegraphics[scale=0.5]{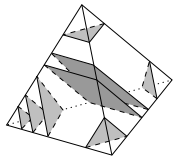}
    \caption{Normal triangles and quadrilaterals within a tetrahedron}
    \label{fig-normaldiscs}
\end{figure}

There are four distinct \emph{types} of triangle and three distinct
\emph{types} of quadrilateral within each tetrahedron (defined by which edges
of the tetrahedron they meet).  The \emph{vector representation} of a
normal surface is a collection of $7n$ integers counting the number of
pieces of each type in each tetrahedron; from this vector in $\R^{7n}$
we can completely reconstruct the original surface.  We treat surfaces
and their vectors interchangeably (so, for instance,
``adding'' two surfaces means adding their two vectors and
reconstructing a new surface from the result).

An early result of Haken is a set of necessary and sufficient conditions
for a vector to represent a normal surface: (i)~all coordinates must be
non-negative; (ii)~the vector must satisfy a set of linear homogeneous
equations (the \emph{matching equations}); and (iii)~there can be at most
one non-zero quadrilateral coordinate corresponding to each tetrahedron (the
\emph{quadrilateral constraints}).  Vectors that satisfy all of these
conditions are called \emph{admissible}.

Jaco and Oertel \cite{jaco84-haken} define the \emph{projective solution
space} to be the polytope in $\R^{7n}$
obtained as a cross-section of the cone defined by (i) and (ii) above.
A \emph{vertex normal surface} lies on an extremal ray of
this cone and is not a multiple of some smaller surface.
The vertex normal surfaces are in bijection with the admissible vertices of
the projective solution space; we let $\scount$ denote
the number of vertex normal surfaces, and we call $\scount$ the
\emph{admissible vertex count}.

The enumeration of vertex normal surfaces is a critical component---and
often the computational bottleneck---of many important topological
algorithms.  This is because one can often prove that,
if an interesting surface exists (such as an incompressible surface or
an essential sphere), then one must appear as a vertex normal surface.
See Hass et~al.\ \cite{hass99-knotnp}
for a more detailed introduction to normal surface theory and its
role in computational topology.

%%%%%%%%%%%%%%%%%%%%%%%%%%%%%%%%%%%%%%%%%%%%%%%%%%%%%%%%%%%%%%%%%%%%%%%%
%
%   Section:  Theoretical Bounds
%
%%%%%%%%%%%%%%%%%%%%%%%%%%%%%%%%%%%%%%%%%%%%%%%%%%%%%%%%%%%%%%%%%%%%%%%%

\section{Theoretical Bounds} \label{s-theory}

As noted in the introduction, the best bound known to date
for the admissible vertex count is $\scount \leq 128^n$, proven by
Hass et~al.\ \cite{hass99-knotnp}.  We begin by tightening this
exponential bound as follows:

\begin{theorem} \label{t-ubound}
    Let $\phi = (1+\sqrt{5})/2$.  Then the admissible vertex count
    $\scount$ is bounded above by $O(\phi^{7n}) \simeq O(29.03^n)$.
\end{theorem}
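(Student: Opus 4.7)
The plan is to translate the theorem into a purely polytope-theoretic statement and then attack it via a strengthening of McMullen's upper bound theorem. First I would observe that the projective solution space is a polytope in $\R^{7n}$ cut out by three kinds of linear constraints: the $7n$ coordinate inequalities $x_i \geq 0$, the matching equations, and a single normalisation equation defining the cross-section. Linear equalities only restrict the polytope to an affine subspace; they contribute no facets. Thus the polytope has at most $7n$ facets in total, one per coordinate hyperplane that actually meets it, and $\scount$ is bounded above by the total vertex count of a polytope with at most $7n$ facets.

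Second, I would prove the following general lemma: every convex polytope with at most $k$ facets has $O(\phi^k)$ vertices, regardless of dimension. Since a generic perturbation of a non-simple polytope produces a simple polytope with no more facets and no fewer vertices, it suffices to treat the simple case. By the dual form of McMullen's upper bound theorem, a simple $d$-polytope with $k$ facets has at most $f_0(C(k,d)^*)$ vertices, where $C(k,d)^*$ is the dual of the cyclic polytope. The explicit formula for this face number is a sum of binomial coefficients of the form $\binom{k - j}{j}$ (with $j \leq \lfloor d/2 \rfloor$). Using the classical Fibonacci identity $\sum_{j \geq 0} \binom{k - j}{j} = F_{k+1}$, each such binomial, and indeed the whole sum, is bounded by $F_{k+1}$; Binet's formula then gives $F_{k+1} \sim \phi^{k+1}/\sqrt{5} = O(\phi^k)$. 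Crucially, this bound is independent of $d$, so it holds uniformly across all dimensions that the projective solution space might take.

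Finally, combining the facet count $k \leq 7n$ with the lemma yields $\scount \in O(\phi^{7n}) \simeq O(29.03^n)$, as required. The main obstacle is the second step: converting the two-parameter McMullen bound (depending on both $k$ and $d$) into a clean one-parameter bound in $k$ alone. The clever move is to avoid a delicate Stirling-based optimisation of $\binom{k-j}{j}$ over $j$ and $d$, and instead absorb all the $d$-dependence into the Fibonacci identity, which collapses the entire sum to $F_{k+1}$ and thus to a single exponential in $\phi$. A secondary (but standard) subtlety is the perturbation argument used to reduce the general case to the simple case without losing vertices.
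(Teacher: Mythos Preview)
Your proposal is correct and follows essentially the same route as the paper: bound the facet count of the projective solution space by $7n$, apply McMullen's upper bound theorem in dual form, and absorb the resulting binomial coefficients into Fibonacci numbers (the paper does this via the pointwise inequality $\binom{k-a}{a} \leq F_k$, proved inductively from Pascal's recurrence, rather than your full-sum identity $\sum_j \binom{k-j}{j} = F_{k+1}$). Your perturbation reduction to simple polytopes is harmless but unnecessary, since McMullen's theorem already applies to arbitrary convex polytopes.
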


We prove this through a simple extension of McMullen's upper bound
theorem \cite{mcmullen70-ubt}.  McMullen gives a tight bound on the
number of vertices for a convex polytope with $k$ facets and $d$
dimensions; we extend this here to a loose bound that covers all
possible dimensions.

\begin{lemma} \label{l-ubound-fib}
    Let $F_0=0$, $F_1=1$, $F_2=1$, \ldots\ represent the Fibonacci
    sequence, where $F_{i+2} = F_{i+1} + F_i$.  Then for any $k \geq 3$,
    a convex polytope with precisely $k$ facets has $\leq F_{k+1}$ vertices.
\end{lemma}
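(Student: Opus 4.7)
The plan is to invoke McMullen's upper bound theorem in its dual form and then reduce to a dimension-free combinatorial inequality. McMullen's UBT says that, among all simplicial $d$-polytopes with $k$ vertices, the cyclic polytope $C(k,d)$ maximises the number of facets; dualising (and perturbing to the simple case when needed), any convex $d$-polytope with $k$ facets has at most $f_{d-1}(C(k,d))$ vertices. The lemma therefore reduces to verifying the dimension-uniform bound $f_{d-1}(C(k,d)) \leq F_{k+1}$ for every admissible dimension $2 \leq d \leq k-1$.

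The key combinatorial input is the classical Fibonacci identity
\[
    F_{k+1} \;=\; \sum_{j=0}^{\lfloor k/2 \rfloor} \binom{k-j}{j},
\]
which follows by a short induction on $k$ using Pascal's rule and the Fibonacci recurrence. The standard explicit formulas for cyclic polytopes are
\[
    f_{d-1}(C(k, 2m)) = \binom{k-m}{m} + \binom{k-m-1}{m-1}, \qquad
    f_{d-1}(C(k, 2m+1)) = 2\binom{k-m-1}{m},
\]
and the strategy is to bound each of these by two specific summands of the Fibonacci identity. The even-dimensional case $d = 2m$ is immediate: $\binom{k-m}{m}$ is exactly the $j=m$ summand, while $\binom{k-m-1}{m-1} \leq \binom{k-m+1}{m-1}$ is dominated by the $j=m-1$ summand, so the two together cover $f_{d-1}(C(k,2m))$.

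The odd-dimensional case $d = 2m+1$ is the main obstacle, since the extra factor of $2$ means no two-summand comparison is immediately apparent. The plan here is to target
\[
    2\binom{k-m-1}{m} \;\leq\; \binom{k-m}{m} + \binom{k-m-1}{m+1},
\]
the $j=m$ and $j=m+1$ summands of $F_{k+1}$. Expanding $\binom{k-m}{m} = \binom{k-m-1}{m} + \binom{k-m-1}{m-1}$ by Pascal reduces this to the elementary unimodality-style inequality $\binom{n}{m-1} + \binom{n}{m+1} \geq \binom{n}{m}$ with $n = k-m-1$; this holds for $m \geq 1$ by a quick comparison of consecutive binomial ratios, and the boundary case $m+1 = n$ is benign. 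The small base cases $k = 3, 4$, where only low-dimensional polytopes exist, are verified by hand.
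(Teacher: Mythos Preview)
Your proof is correct and follows the same overall strategy as the paper: invoke McMullen's upper bound theorem in dual form, use the explicit facet counts of cyclic polytopes, split into even and odd dimension, and bound by Fibonacci numbers. The difference lies in how the Fibonacci bookkeeping is organised.

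The paper isolates the single inequality $\binom{k-a}{a} \leq F_k$ (valid for all $a$), proving it by a one-line Pascal induction. With this lemma in hand, the even-dimensional count is at most $F_k + F_{k-2} \leq F_{k+1}$ and the odd-dimensional count is at most $2F_{k-1} \leq F_{k+1}$, with no further work. Your route via the diagonal-sum identity $F_{k+1} = \sum_j \binom{k-j}{j}$ is equivalent in spirit---each individual summand being at most $F_{k+1}$ is essentially the paper's lemma shifted by one---but by targeting two \emph{specific} summands you are forced, in the odd case, to establish the extra inequality $\binom{n}{m-1} + \binom{n}{m+1} \geq \binom{n}{m}$. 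This is true (and your ratio argument handles it), but it is additional overhead that the paper's version sidesteps entirely by bounding each copy of $\binom{k-m-1}{m}$ separately by $F_{k-1}$ rather than by a particular diagonal term.
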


\begin{proof}
    Suppose the polytope $P$ is $d$-dimensional with precisely $k$ facets.
    Then McMullen's theorem (taken in dual form) shows that $P$ has at most
    \begin{equation} \label{eqn-ubt}
    \binom{k - \lfloor \frac{d+1}{2} \rfloor}{k - d} +
    \binom{k - \lfloor \frac{d+2}{2} \rfloor}{k - d}
    \quad = \quad
    \binom{k - \lfloor\frac{d+1}{2}\rfloor}{d - \lfloor\frac{d+1}{2}\rfloor} +
    \binom{k - \lfloor\frac{d+2}{2}\rfloor}{d - \lfloor\frac{d+2}{2}\rfloor}
    \end{equation}
    vertices.\footnote{This is the number of facets of the cyclic
    $d$-dimensional polytope with $k$ vertices \cite{grunbaum03}.}
    For even $d$ this can be rewritten as
    $\binom{k-a}{a} + \binom{(k-2)-b}{b}$ for suitable integers $a,b$,
    and for odd $d$ it can be rewritten as $2\binom{(k-1)-a}{a}$
    for a suitable integer $a$.

    We now claim that $\binom{k-a}{a} \leq F_k$ for any $k,a$ with
    $k \geq 1$.  This is easily established for $k=1,2$, and the full
    claim follows from the inductive step
    $\binom{k-a}{a} = \binom{k-1-a}{a} + \binom{k-1-a}{a-1}
    = \binom{(k-1)-a}{a} + \binom{(k-2)-(a-1)}{a-1}
    \leq F_{k-1} + F_{k-2} = F_k$.

    From here our lemma is straightforward.  If $d$ is even then
    the number of vertices of $P$ is at most
    $\binom{k-a}{a} + \binom{(k-2)-b}{b} \leq F_k + F_{k-2}
    \leq F_k + F_{k-1} = F_{k+1}$,
    and if $d$ is odd then the number of vertices is at most
    $2\binom{(k-1)-a}{a} \leq 2 F_{k-1} \leq F_k + F_{k-1} = F_{k+1}$.
\end{proof}

Unlike McMullen's result, Lemma~\ref{l-ubound-fib} is not tight.
Nevertheless, it gives us a very good\footnote{Experimentation shows
that this asymptotic upper bound of $\phi^k \simeq 1.618^k$ is close
to optimal.  If we maximise equation~(\ref{eqn-ubt})
over all $d$ for each $k=100,\ldots,200$, the maximum grows at a rate
of approximately $1.613^k$.}
asymptotic upper bound of $O(\phi^k)$,
which is enough to prove our main theorem.

\begin{proof}[Proof of Theorem~\ref{t-ubound}]
    The facets of the projective solution space in $\R^{7n}$
    are defined by the $7n$ inequalities $x_1 \geq 0$, \ldots, $x_{7n} \geq 0$,
    and so there are at most $7n$ facets in total.  Lemma~\ref{l-ubound-fib}
    then shows that the projective solution space has at most
    $F_{7n+1}$ vertices, and so $\scount \leq F_{7n+1}$.
    Using the standard formula $F_k = \lfloor\phi^k/\sqrt{5}+\frac12\rfloor$
    it follows that $\scount \oin O(\phi^{7n})$.
\end{proof}

It is interesting to note that Theorem~\ref{t-ubound} makes no use of
admissibility---this suggests that, although the bound of $\phi^{7n}$
is a strong improvement on $128^n$, this bound is still very loose.
We confirm this through experimentation in Section~\ref{s-practice}.
Although we only consider closed 3-manifolds in this paper, it
should be noted that Theorem~\ref{t-ubound} and its proof apply equally
well to triangulations with boundary, and also to the
\emph{ideal triangulations} of Thurston \cite{thurston78-lectures}.

%%%%%%%%%%%%%%%%%%%%%%%%%%%%%%%%%%%%%%%%%%%%%%%%%%%%%%%%%%%%%%%%%%%%%%%%
%
%   Section:  Extreme Cases
%
%%%%%%%%%%%%%%%%%%%%%%%%%%%%%%%%%%%%%%%%%%%%%%%%%%%%%%%%%%%%%%%%%%%%%%%%

\section{Extreme Cases} \label{s-extreme}

Having tightened the upper bound from above, we now turn our attention
to limiting the upper bound from below.  We do this by building
pathological triangulations for which
$\scount \oin \Theta(17^{n/4}) \simeq \Theta(2.03^n)$.
This growth rate shows that an exponential upper bound on $\scount$ is
unavoidable, and furthermore disproves
an earlier conjecture of the author that $\scount \oin O(2^n)$.

We begin by describing 4-blocks, which are small building blocks that appear
repeatedly throughout our triangulations.  Using these building blocks,
we then construct the family of pathological triangulations
$\path_1,\path_2,\ldots$.

\begin{defn}[4-block]
    A \emph{4-block} is a triangulation with boundary,
    built from the four tetrahedra $\Delta_1,\Delta_2,\Delta_3,\Delta_4$
    using the following construction.

    \begin{figure}[htb]
        \centering
        \includegraphics{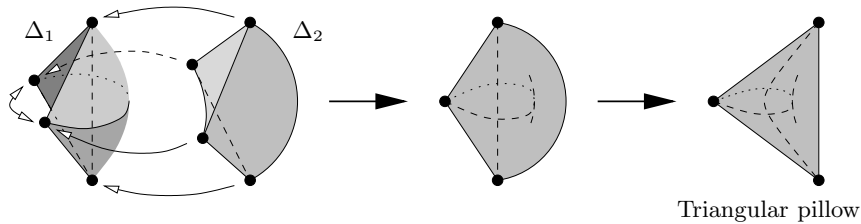}
        \caption{The two-tetrahedron triangular pillow
            at the centre of a 4-block}
        \label{fig-pillow}
    \end{figure}

    We begin by folding together two faces of $\Delta_1$, and then
    wrapping $\Delta_2$ around the remaining two faces as illustrated
    in Figure~\ref{fig-pillow}.  This forms a \emph{triangular pillow}
    with three vertices, three boundary edges, two internal edges,
    and two boundary faces.

    \begin{figure}[htb]
        \centering
        \includegraphics{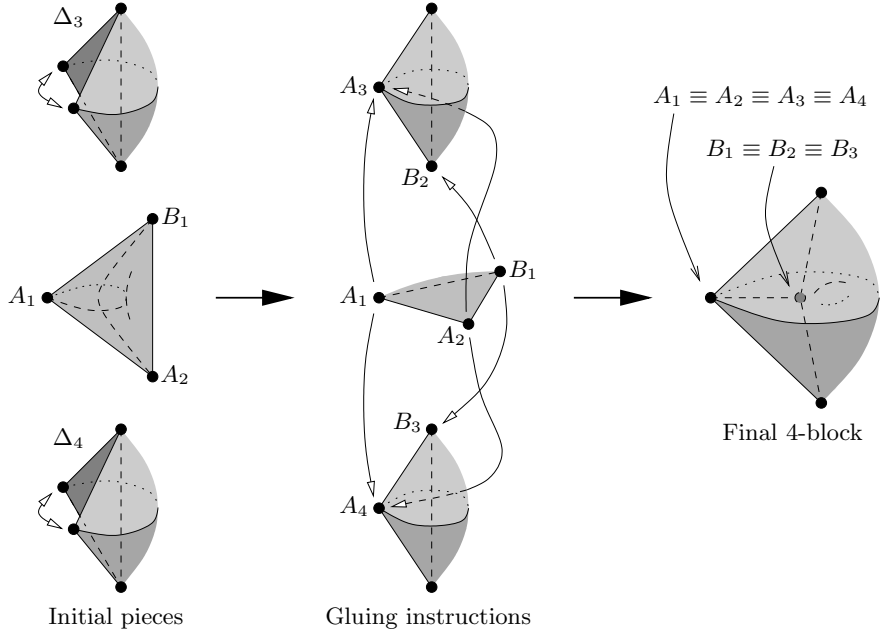}
        \caption{Building a 4-block from two tetrahedra and a
            triangular pillow}
        \label{fig-block}
    \end{figure}

    Next we fold together two faces of $\Delta_3$ and two faces of
    $\Delta_4$, as illustrated in the leftmost column of
    Figure~\ref{fig-block}.  To finish, we join the pillow to
    both $\Delta_3$ and $\Delta_4$ as illustrated in the central column
    of Figure~\ref{fig-block}---the upper face $A_1B_1A_2$ of the pillow
    is glued to the lower face $A_3B_2A_3$ of $\Delta_3$, and the
    lower face $A_1B_1A_2$ of the pillow is glued to the upper face
    $A_4B_3A_4$ of $\Delta_4$.

    The final result is shown in the rightmost column of
    Figure~\ref{fig-block}, with three boundary vertices and one
    internal vertex.  The triangular pillow is buried in the middle of
    this structure, wrapped around the internal vertex; for simplicity
    the two edges inside the pillow are not shown.
\end{defn}

\begin{defn}[Pathological triangulation $\path_k$]
    For each integer $k \geq 1$, the \emph{pathological triangulation}
    $\path_k$ is constructed from $n=4k$ tetrahedra in the following manner.

    From these $4k$ tetrahedra we build $k$ distinct 4-blocks, labelled
    $\block_1,\ldots,\block_k$.  Within each 4-block $\block_i$ we label the
    three boundary vertices $P_i,Q_i,R_i$, where $P_i$ sits between both
    boundary triangles as illustrated in Figure~\ref{fig-path}.

    \begin{figure}[htb]
        \centering
        \includegraphics{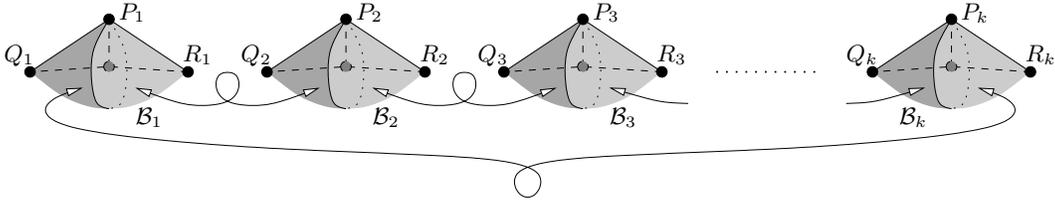}
        \caption{Building the pathological triangulation $\path_k$
            from $k$ distinct 4-blocks}
        \label{fig-path}
    \end{figure}

    For each $i=1,\ldots,k$ we join blocks $\block_i$ and $\block_{i+1}$
    as follows (where $\block_{k+1}$ is taken to mean $\block_1$).
    Triangle $P_iP_iR_i$ is joined to triangle $Q_{i+1}P_{i+1}P_{i+1}$;
    note that this is ``twisted'', not a direct gluing, since it maps
    $P_i \leftrightarrow Q_{i+1}$ and $P_{i+1} \leftrightarrow R_i$.
    There are in fact two ways this gluing can be performed (one a
    reflection of the other); we resolve this ambiguity by orienting each
    block consistently, and then choosing the gluing that preserves
    orientation.

    An effect of these gluings is to identify all of the $P_i$, $Q_i$
    and $R_i$ to a single vertex, so that
    $\path_k$ has $k+1$ vertices in total (counting also the
    $k$ internal vertices from each original block).
\end{defn}

It is not clear that each $\path_k$ is a 3-manifold triangulation (in
particular, that $\path_k$ looks like $\R^3$ in the vicinity of each
vertex).  The following sequence of results proves this by showing that
every $\path_k$ is in fact a triangulation of the 3-sphere.

\begin{lemma} \label{l-block}
    A 4-block is a triangulation of the 3-ball (i.e., the solid
    3-dimensional ball), with
    a boundary consisting of two triangles in the formation shown in
    Figure~\ref{fig-blockbdry}.
\end{lemma}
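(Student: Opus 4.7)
The plan is to prove that the 4-block is a 3-ball by building it up incrementally, exploiting the fact that gluing two 3-balls along a 2-disk in their boundaries yields another 3-ball.

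First I would verify that the triangular pillow formed from $\Delta_1$ and $\Delta_2$ is a 3-ball. Folding two adjacent faces of $\Delta_1$ together along their common edge produces a combinatorial ``cone'' whose remaining two faces form a 2-sphere; then wrapping $\Delta_2$ around these two faces caps off the sphere to yield a 3-ball. An alternative way to see this is that the pillow is combinatorially the suspension of a triangle, and a suspension of a disk is always a ball. Either way, I would confirm the 3-ball structure by checking the link conditions: each of the three boundary vertices has a 2-disk link, the single internal vertex has a 2-sphere link, and every edge has either a disk or a circle link (corresponding to boundary or internal edges). The counts $3+3+2+2$ of vertices, boundary edges, internal edges, and boundary faces stated in the construction agree with this picture.

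Second, I would analyze the folded tetrahedra $\Delta_3$ and $\Delta_4$. Each is a tetrahedron with two adjacent faces identified; this is the quotient of the 3-ball by a fold on a portion of its boundary 2-sphere. Since the fold identifies two adjacent faces of the boundary of a tetrahedron along their shared edge, the quotient of the boundary sphere is still a 2-sphere, and the quotient of the solid tetrahedron is a 3-ball whose boundary consists of two triangles. I would again confirm this by a link check at each vertex and edge so as to rule out any non-manifold pathology introduced by the fold.

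Third, I would assemble the 4-block from these three pieces. The pillow's upper boundary face is glued to one boundary face of $\Delta_3$, and its lower boundary face to one boundary face of $\Delta_4$; each such gluing is a disk-sum of two 3-balls and therefore yields a 3-ball. After both gluings, the only unglued boundary faces are one triangle from $\Delta_3$ and one triangle from $\Delta_4$, which together form the two-triangle boundary claimed in the statement. A bookkeeping check then confirms the vertex count: all of the $P_i, Q_i, R_i$ identifications from the pillow survive on the boundary, yielding three boundary vertices, while the fold identifications bury a single additional vertex in the interior.

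The main obstacle will be the careful combinatorial bookkeeping required to verify that the folding of adjacent faces in $\Delta_1$, $\Delta_3$, and $\Delta_4$ does not create a singular edge or a non-spherical vertex link. In particular, I must verify that the ``internal vertex'' arising inside the pillow has link $S^2$, so that the assembled object is a genuine 3-manifold with boundary; the cleanest verification is probably an explicit computation of this link together with an Euler characteristic check ($\chi = 1$ for a 3-ball with boundary $S^2$), rather than attempting to describe the quotient abstractly.
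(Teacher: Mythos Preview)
Your approach is sound and genuinely different from the paper's. The paper gives essentially no argument: it states that the result ``is evident from the construction'' in the figure and then defers to a computer verification using \regina's 3-ball recognition routine. You instead propose an elementary, by-hand argument: show that each folded tetrahedron and the pillow are individually 3-balls, and then use the fact that a disk-sum of two 3-balls is a 3-ball to assemble the 4-block. This buys you a proof that does not depend on software or on reading a figure, at the cost of the combinatorial bookkeeping you correctly flag at the end.

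Two cautions. First, your aside that the pillow is ``the suspension of a triangle'' is not quite right as stated (the suspension of a 2-simplex is 4-dimensional); you presumably mean something like a join or a cone construction, and it would be cleaner simply to run the link check you propose. Second, and more substantively: not every identification of two faces of a tetrahedron yields a 3-ball---one of the standard one-tetrahedron face-pairings gives a solid torus, and others fail to be manifolds at all. So the statement ``a tetrahedron with two adjacent faces identified \ldots\ is a 3-ball'' is not automatic; it depends on precisely which of the six possible gluings is used for $\Delta_1$, $\Delta_3$, $\Delta_4$, information that in the paper is carried only by the figures. Your link/Euler-characteristic check will detect this, but you should not treat the 3-ball conclusion for the folded tetrahedra as obvious before that check is done.
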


\begin{figure}[htb]
    \centering
    \includegraphics{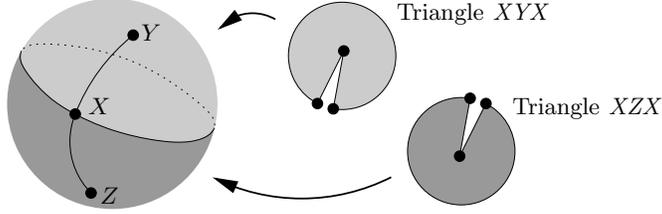}
    \caption{A 3-ball whose boundary consists of two triangles}
    \label{fig-blockbdry}
\end{figure}

\begin{proof}
    This is evident from the construction in Figure~\ref{fig-block}.
    It can also be verified computationally using the software
    package {\regina} \cite{regina}, which implements 3-sphere and
    3-ball recognition \cite{burton04-regina}.
\end{proof}

\begin{lemma} \label{l-ball-join}
    Let $\tri_1$ and $\tri_2$ each be triangulations of the 3-ball
    with boundaries in the formation shown in Figure~\ref{fig-blockbdry}.
    If we identify one boundary triangle of $\tri_1$ with one boundary
    triangle of $\tri_2$ under any of the six possible identifications,
    the result is always another triangulation of the 3-ball
    with boundary in the formation shown in Figure~\ref{fig-blockbdry}.
\end{lemma}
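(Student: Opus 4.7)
The plan is to split the argument into two parts: first, confirm that the underlying topological space is still a 3-ball, and second, track the combinatorial structure of the remaining boundary.

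For the topological claim, I would invoke the standard fact that the union of two 3-balls identified along a 2-disk lying in each of their boundary spheres is again a 3-ball. Each boundary triangle of $\tri_i$ is a 2-disk sitting inside $\partial\tri_i \cong S^2$, and each of the six possible affine identifications is a homeomorphism between two such disks. The glued space is therefore a 3-ball, regardless of which of the six identifications is used.

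For the boundary structure, let $t_i$ and $s_i$ denote the two boundary triangles of $\tri_i$. By hypothesis, in the formation of Figure~\ref{fig-blockbdry} the two triangles $t_i$ and $s_i$ share all three vertices and all three edges. When we identify $t_1$ with $t_2$, the three vertices of $t_1$ are paired with the three vertices of $t_2$ in some way, and similarly for the edges. After the gluing, each resulting vertex class was already a vertex of $s_1$ (via $t_1 \cap s_1$) and of $s_2$ (via $t_2 \cap s_2$); the analogous statement holds edge by edge. Hence the two untouched triangles $s_1,s_2$ form the entire new boundary, and they share all three vertices and all three edges, which is precisely the formation of Figure~\ref{fig-blockbdry}.

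The only subtle point, which I would verify explicitly in the write-up, is that the six affine identifications correspond to the six permutations of the three pairs of vertices (three cyclic rotations together with these composed with a reflection), but every such permutation leaves the combinatorial conclusion above intact: for every pairing $v \leftrightarrow \sigma(v)$, the identified class lies both in $s_1$ and in $s_2$. I do not expect a serious obstacle here. An alternative in the spirit of Lemma~\ref{l-block} would be to verify specific instances computationally in \regina, but since the statement concerns arbitrary $\tri_1,\tri_2$, the general topological argument above is the one that actually does the job.
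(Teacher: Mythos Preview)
Your argument is correct, but it takes a genuinely different route from the paper's.  The paper proves Lemmata~\ref{l-ball-join} and~\ref{l-ball-wrap} together by first observing that the conclusion depends only on the underlying 3-manifold and the combinatorics of its boundary, not on the particular triangulations $\tri_1,\tri_2$; it then fixes $\tri_1=\tri_2=$ a 4-block and checks all six identifications computationally in \regina.  You instead give a self-contained topological proof: the standard fact that two 3-balls glued along boundary disks yield a 3-ball handles the underlying space, and your vertex/edge bookkeeping handles the boundary formation.  Your approach has the advantage of being software-independent and transparent; the paper's approach has the advantage of being effortless once the ``topological, hence triangulation-independent'' reduction is made, and of reusing machinery already in play for Lemma~\ref{l-block}.

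One remark on your closing sentence: you dismiss the computational route on the grounds that the statement concerns \emph{arbitrary} $\tri_1,\tri_2$.  This is exactly the point the paper addresses with its one-line reduction---because whether the glued object is a 3-ball, and what its boundary pattern looks like, are invariants of the homeomorphism type together with the boundary combinatorics, a single verified instance really does establish the general case.  So the \regina\ check is a legitimate full proof, not merely evidence; your direct argument is simply a more classical alternative.
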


\begin{lemma} \label{l-ball-wrap}
    Let $\tri$ be a triangulation of the 3-ball with boundary in the
    formation shown in Figure~\ref{fig-blockbdry}.  If we
    identify the two boundary triangles under any of the three
    possible orientation-preserving identifications, the result is
    always a closed 3-manifold triangulation of the 3-sphere.
\end{lemma}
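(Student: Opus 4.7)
The plan is to combine a topological reduction with a small finite verification, following a strategy parallel to that in Lemma~\ref{l-block}.

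The first step is to observe that the homeomorphism type of the quotient $\tri / (T_1 \sim_f T_2)$ depends only on the simplicial isomorphism $f$ and on the combinatorial boundary of $\tri$, not on the interior combinatorics of $\tri$. This follows from the fact that any two 3-balls with the same boundary 2-sphere triangulation are homeomorphic via a homeomorphism extending any chosen simplicial isomorphism of their boundaries: by the Alexander trick, every self-homeomorphism of $S^2 = \partial B^3$ extends to a self-homeomorphism of $B^3$. Consequently, we may replace $\tri$ by the triangular pillow $\tri_0$ of Figure~\ref{fig-pillow}, which is itself a 3-ball with the required boundary formation by the same reasoning as in Lemma~\ref{l-block}.

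Once reduced to $\tri_0$, the problem becomes a finite verification. For each of the three orientation-preserving identifications $f$, the quotient $\tri_0 / f$ is a closed complex built from only two tetrahedra, and its topology can be determined either by direct inspection or, as in Lemma~\ref{l-block}, by invoking \regina, which implements 3-sphere recognition \cite{burton04-regina}. Since the paper defines a 3-manifold triangulation in terms of the underlying topological space, exhibiting this space as $S^3$ simultaneously yields the manifold property and identifies the manifold as the 3-sphere; the combinatorial condition on vertex and edge links follows automatically because any triangulation of $S^2$ is a combinatorial 2-sphere.

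The main obstacle is the verification of the three cases on $\tri_0$, particularly the two cyclic-rotation identifications: these collapse all three boundary vertices to one vertex and all three boundary edges to one edge, and one must confirm that the resulting 2-tetrahedron complex is genuinely a manifold rather than a pinched quotient. This is precisely the kind of check that \regina automates, which is why the paper follows the computational route used for Lemma~\ref{l-block}.
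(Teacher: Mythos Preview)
Your strategy is the same as the paper's: reduce to a single test triangulation by arguing that the outcome is a property of the underlying 3-manifold rather than of the particular triangulation (you justify this explicitly via the Alexander trick; the paper simply states it in one line), and then verify the three orientation-preserving identifications computationally with \regina. The paper carries out that verification on a 4-block.

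The concrete problem is your choice of $\tri_0$. The two-tetrahedron pillow of Figure~\ref{fig-pillow} does \emph{not} have the boundary formation of Figure~\ref{fig-blockbdry}. The pillow has three boundary vertices and three boundary edges, with its two boundary faces sharing all three edges symmetrically; each vertex appears exactly once in each face. By contrast, the 4-block's two boundary faces have the form $PPR$ and $QPP$ (this is visible in the construction of $\path_k$, where triangle $P_iP_iR_i$ is glued to triangle $Q_{i+1}P_{i+1}P_{i+1}$): vertex $P$ occupies four of the six face corners, vertices $Q$ and $R$ lie in only one face each, and one boundary edge is a loop at $P$. Your own reduction step requires $\tri_0$ to carry the \emph{same} combinatorial boundary as the $\tri$ in the hypothesis, so the pillow cannot serve here. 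Replacing the pillow by a 4-block---precisely what the paper does---repairs the argument, since Lemma~\ref{l-block} guarantees that the 4-block has the required boundary.
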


\begin{proof}
    Both of these results are essentially properties of 3-manifolds,
    not their underlying triangu\-lations---if they hold for any selection
    of triangulations $\tri_1,\tri_2,\tri$ then they must hold for all
    such selections.  We verify these results using {\regina} by
    choosing 4-blocks for our triangulations and testing all
    six/three possible identifications.
\end{proof}

Since each $\path_k$ is built by joining
together 4-blocks along boundary triangles in an orientation-preserving
fashion, the following result follows
immediately from Lemmata~\ref{l-block}--\ref{l-ball-wrap}.

\begin{corollary}
    For each $k \geq 1$, $\path_k$ is a closed 3-manifold triangulation
    of the 3-sphere.
\end{corollary}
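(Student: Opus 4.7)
The plan is to prove the corollary by an inductive argument that applies Lemma~\ref{l-block} to each individual 4-block, Lemma~\ref{l-ball-join} to handle each of the first $k-1$ boundary-triangle gluings, and finally Lemma~\ref{l-ball-wrap} for the single cyclic gluing that closes everything up.

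More concretely, I would first invoke Lemma~\ref{l-block} to assert that each of the $k$ building blocks $\block_1,\ldots,\block_k$ is, on its own, a triangulation of the 3-ball whose boundary consists of two triangles in the formation of Figure~\ref{fig-blockbdry}. Next, I would perform the gluings in the order $\block_1\to\block_2\to\cdots\to\block_k$, i.e.\ along the $k-1$ triangle pairs $P_iP_iR_i \leftrightarrow Q_{i+1}P_{i+1}P_{i+1}$ for $i=1,\ldots,k-1$, and show by induction on $j$ that after the first $j-1$ gluings the partial object $\block_1\cup\cdots\cup\block_j$ is a 3-ball with boundary in the same standard two-triangle formation. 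The base case $j=1$ is just Lemma~\ref{l-block}; the inductive step glues one boundary triangle of the 3-ball $\block_1\cup\cdots\cup\block_j$ to one boundary triangle of the 3-ball $\block_{j+1}$, which is exactly the situation covered by Lemma~\ref{l-ball-join}.

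After $k-1$ such chain gluings, what remains is a single 3-ball whose two residual boundary triangles are the unused triangle of $\block_1$ and the unused triangle of $\block_k$. The last gluing in the definition of $\path_k$ (the cyclic wrap-around joining $\block_k$ to $\block_{k+1}=\block_1$) identifies precisely these two triangles. Because the construction specifies that every gluing is chosen to preserve a fixed orientation on the blocks, this final identification is orientation-preserving, and Lemma~\ref{l-ball-wrap} then immediately yields that $\path_k$ is a closed 3-manifold triangulation of the 3-sphere.

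The potentially tricky point — and what I would check once before running the induction — is that the inductive hypothesis uses only the \emph{topological} information from Lemma~\ref{l-ball-join} (namely, that the result is a 3-ball with boundary of the standard two-triangle type), not any combinatorial features of a specific 4-block. Since Lemma~\ref{l-ball-join} is stated for \emph{any} two triangulations of the 3-ball with boundary in this formation and \emph{any} of the six possible identifications, the induction goes through unchanged regardless of which particular triangle of the partial chain we expose as the next gluing site, and regardless of how the labels $P_i,Q_i,R_i$ get permuted along the way. Once that is observed, no further work is required beyond invoking the three lemmata in the order above.
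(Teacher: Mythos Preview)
Your proposal is correct and is essentially the same argument the paper has in mind: the paper simply states that the corollary ``follows immediately from Lemmata~\ref{l-block}--\ref{l-ball-wrap}'', and you have spelled out the obvious inductive chaining (Lemma~\ref{l-ball-join} for the first $k-1$ gluings, Lemma~\ref{l-ball-wrap} for the cyclic closure) that makes this immediate.
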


We turn our attention now to counting the vertex normal surfaces for
each triangulation $\path_k$.  Recalling that $k=n/4$,
the following result shows that for these pathological triangulations
we have $\scount \oin \Theta(17^{n/4}) \simeq \Theta(2.03^n)$.

\begin{lemma} \label{l-worst-4k}
    For each $k \geq 1$, $\path_k$ has precisely
    $\scount = 17^k + k$ vertex normal surfaces.
\end{lemma}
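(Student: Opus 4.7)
The plan is to exploit the local structure of $\path_k$ as a cyclic chain of 4-blocks $\block_1,\ldots,\block_k$ and reduce the enumeration of admissible vertices to a local analysis inside a single block combined with a gluing argument. At a high level, every global vertex normal surface should correspond either to a choice of one local ``type'' per block with boundary data consistent around the cycle, or to a vertex-linking sphere around one of the $k$ internal vertices.

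First I would classify the admissible normal surface pieces inside a single 4-block, treated as a triangulation with boundary. Each block has $28$ surface coordinates, a handful of matching equations, and four quadrilateral constraints. Using {\regina}---already invoked in Lemmata~\ref{l-block}--\ref{l-ball-wrap}---I would enumerate the vertex normal surfaces of this bounded triangulation, sorted by the boundary arc patterns they induce on each of the two boundary triangles of Figure~\ref{fig-blockbdry}.

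Second, I would prove a local-to-global gluing principle: an admissible vector on $\path_k$ restricts on each $\block_i$ to an admissible vector whose boundary arcs are compatible with those of $\block_{i-1}$ and $\block_{i+1}$ along the shared twisted triangles. This is immediate because the matching equations at a glued triangle equate the two adjacent arc counts and the quadrilateral constraints are purely block-local. Consequently, the admissible cone of $\path_k$ is naturally a fibre product of the $k$ local admissible cones over the cyclic boundary data, and its extreme rays split into two kinds: cyclically consistent choices of one local extreme ray per block, and extreme rays supported in a single block.

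Third, I would enumerate these two sources of extreme rays explicitly. The enumeration in {\regina} of $\scount$ for $\path_1,\path_2,\path_3$ would verify the formula $17^k+k$ in the base cases. Interpreting the outputs, the $17$ ``cyclically uniform'' local types---each of which can be combined freely with any choice in the neighbouring blocks---would account for $17^k$ global admissible vertices, while the $k$ ``localised'' extreme rays are the vertex-linking spheres around the $k$ internal vertices (the link of the common external vertex is not an additional extreme ray, as it appears as one of the $17^k$ cyclic choices). Propagating the verification from the base cases to arbitrary $k$ uses the cyclic symmetry of the construction and the block-local nature of the matching and quadrilateral constraints.

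The main obstacle is establishing rigorously that there are exactly $17$ cyclically compatible local types per block, that any combination of one such type per block is an extreme ray of the global admissible cone, and that no further global extreme ray arises. This requires a careful case analysis of the admissible solutions of a single 4-block---essentially intractable by hand but entirely manageable with {\regina}---together with a structural argument showing that the product decomposition of the admissible cone is preserved under the cyclic gluing, so that extremality of a global vector reduces to extremality of each local restriction.
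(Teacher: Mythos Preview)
Your outline is essentially the paper's approach---decompose into blocks, analyse one block with \regina, then glue---but two of the steps you describe loosely are exactly where the work lies, and your phrasing of them is not quite right.

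First, the ``cyclic boundary compatibility'' is much more rigid than you suggest. Because the twisted gluings identify \emph{all} boundary vertices $P_i,Q_i,R_i$ to a single vertex of $\path_k$, the matching equations across the glued triangles force the three boundary-arc multiplicities on each block to be equal to one another and to a common value $\mu$ independent of $i$. This is what decouples the blocks completely: once you impose ``boundary $=\mu\alpha+\mu\beta+\mu\gamma$'' inside each block, the global cone becomes a genuine product (up to the common scalar $\mu$), not merely a fibre product over some cyclic boundary data. Without isolating this specific constraint, your claim that global extreme rays are ``one local extreme ray per block'' does not follow from general fibre-product reasoning.

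Second, and relatedly, the $17$ ``cyclically uniform local types'' are \emph{not} the $17$ vertex normal surfaces of the 4-block. Of those original $17$, only two have boundary $\alpha+\beta+\gamma$. The paper performs a second computation: it intersects the block's projective solution space with the two hyperplanes enforcing equal $\alpha,\beta,\gamma$ multiplicities, obtaining $18$ admissible vertices---the internal vertex link $\mathbf{a}$ (boundary empty) plus $17$ new surfaces with boundary $\alpha+\beta+\gamma$, most of which are sums of two or three of the original block vertices. These $17$ are the objects you combine freely to get $17^k$. What makes this second step tractable, and what you do not mention, is the observation that the original $17$ block surfaces are all pairwise \emph{compatible}: in every tetrahedron two of the three quadrilateral types never appear among them, so the quadrilateral constraints become vacuous and the entire remaining analysis is pure convex geometry (a filtered double-description step). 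Without this compatibility observation you cannot simply intersect with hyperplanes and read off admissible vertices, nor conclude that arbitrary cross-block combinations remain admissible.
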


\begin{proof}
    % Our strategy is (i)~to analyse vertex normal surfaces within
    % a single 4-block, and then (ii)~to analyse the interaction between
    % 4-blocks and thereby count vertex normal surfaces within $\path_k$.
    % This works because the matching equations are simple to express
    % in terms of 4-block boundaries, and because any two normal
    % surfaces in $\path_k$ are
    % \emph{compatible}---their vector sum never violates the
    % quadrilateral constraints.  This latter point relieves us of the
    % difficult task of tracking the quadrilateral constraints
    % as we piece surfaces together.  The details are as follows.
    %
    Consider a single 4-block with boundary vertices labelled $P,Q,R$
    as before, and let $S$ denote the internal vertex.
    Define $\baraw$, $\bbraw$ and $\bcraw$ to be small loops on the
    4-block boundary surrounding $P$, $Q$ and $R$ respectively,
    as illustrated in Figure~\ref{fig-curves}.

    \begin{figure}[htb]
        \centering
        \includegraphics{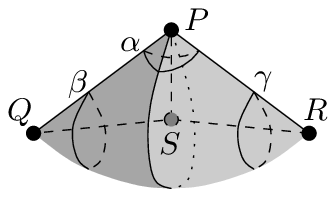}
        \caption{The curves $\baraw,\bbraw,\bcraw$ on the boundary of a
            4-block}
        \label{fig-curves}
    \end{figure}

    Using the software package {\regina}, we can construct the
    projective solution space for this 4-block.  There are
    17 admissible vertices in total, corresponding to 17 vertex
    normal surfaces: one with empty boundary, and 16 whose boundary
    consists of some combination of $\baraw$, $\bbraw$ and $\bcraw$.
    These surfaces are summarised in Table~\ref{tab-dd-block},
    and we label them $\s{a},\s{b},\ldots,\s{q}$ as shown.

    \begin{table}[htb]
    \small
    \centering
    \begin{tabular}{c|c|l}
    \textbf{Label} & \textbf{Boundary} &
        \multicolumn{1}{|c}{\textbf{Description}} \\
    \hline
    $\s{a}$ & ---     & Small sphere around internal vertex $S$ \\
    $\s{b}$ & $\ba$   & Small disc around boundary vertex $P$ \\
    $\s{c}$ & $\bb$   & Small disc around boundary vertex $Q$ \\
    $\s{d}$ & $\bc$   & Small disc around boundary vertex $R$ \\
    $\s{e}$ & $\ba$   & Tube from $P$ to $S$, closed around $S$ \\
    $\s{f}$ & $\bb$   & Tube from $Q$ to $S$, closed around $S$ \\
    $\s{g}$ & $\bc$   & Tube from $R$ to $S$, closed around $S$ \\
    $\s{h}$ & $\bab$  & Tube from $P$ to $Q$ via $S$, open at both ends \\
    $\s{i}$ & $\bac$  & Tube from $P$ to $R$ via $S$, open at both ends \\
    $\s{j}$ & $\bbc$  & Tube from $Q$ to $R$ via $S$, open at both ends \\
    $\s{k}$ & $\babc$ & Forked tube joining all of $P,Q,R$ via $S$, open
                        at all three ends \\
    $\s{l}$ & $\ba$   & Surface $\s{b}$ with large ``balloon'' disc attached
                        inside the pillow \\
    $\s{m}$ & $\ba$   & Surface $\s{b}$ with punctured torus attached
                        inside the pillow \\
    $\s{n}$ & $\ba$   & Surface $\s{e}$ with punctured torus attached
                        inside the pillow \\
    $\s{o}$ & $\bab$  & Surface $\s{h}$ with punctured torus attached
                        inside the pillow \\
    $\s{p}$ & $\bac$  & Surface $\s{i}$ with punctured torus attached
                        inside the pillow \\
    $\s{q}$ & $\babc$ & Surface $\s{k}$ with punctured torus attached
                        inside the pillow
    \end{tabular}
    \caption{The $17$ vertex normal surfaces within a 4-block}
    \label{tab-dd-block}
    \end{table}

    It is important to note that $\s{a},\s{b},\ldots,\s{q}$ are all
    \emph{compatible}; that is, no combination of their vectors can
    ever violate the quadrilateral constraints.\footnote{This is
    because, within each tetrahedron, we observe that two of the three
    quadrilateral types never appear \emph{anywhere} amongst the surfaces
    $\s{a},\s{b},\ldots,\s{q}$.}
    This is an unusual
    but extremely helpful state of affairs, since we can effectively
    ignore the quadrilateral constraints from here onwards.

    Now consider the full set of 4-blocks $\block_1,\ldots,\block_k$;
    let $\s{a}_i,\s{b}_i,\ldots,\s{q}_i$ denote the corresponding surfaces in
    $\block_i$, and let $\baraw_i,\bbraw_i,\bcraw_i$ denote the
    corresponding boundary curves.
    Any normal surface in $\path_k$ is a union of normal
    surfaces in $\block_1,\ldots,\block_k$, and hence can be expressed as
    \begin{equation*}
        (\lambda_{1,1}\, \s{a}_1 + \ldots + \lambda_{1,17}\, \s{q}_1) +
         \ldots +
        (\lambda_{k,1}\, \s{a}_k + \ldots + \lambda_{k,17}\, \s{q}_k)
    \end{equation*}
    for some family of constants
    $\lambda_{1,1},\ldots,\lambda_{k,17} \geq 0$.
    In this form, it can be shown\footnote{The argument uses the
    facts that curves $\baraw_i,\bbraw_i,\bcraw_i$ surround
    vertices $P_i,Q_i,R_i$ respectively, and that all of these vertices
    are identified together in the overall triangulation $\path_k$.}
    that the matching equations for $\path_k$
    reduce to the following statement:
    \begin{quote}
        There is some non-negative $\mu \in \R$ such that,
        for every $i$, the sum
        $\lambda_{i,1}\, \s{a}_i + \ldots + \lambda_{i,17}\, \s{q}_i$
        has boundary $\mu\baraw_i+\mu\bbraw_i+\mu\bcraw_i$.
    \end{quote}
    In other words, the portion of the overall surface within each
    4-block $\block_i$ must have boundary
    $\mu\baraw_i+\mu\bbraw_i+\mu\bcraw_i$, where $\mu$ is independent of $i$.

    Return now to a single 4-block with admissible
    vertices $\s{a},\ldots,\s{q}$, and let
    $\lambda_{1}\, \s{a} + \ldots + \lambda_{17}\, \s{q}$
    be some point in the projective solution space for this 4-block.
    We can ensure that the corresponding surface has boundary of the
    form $\mu\baraw + \mu\bbraw + \mu\bcraw$ by imposing the following
    constraints:\footnote{Each line in these constraints corresponds to
    a section of the \emph{Boundary} column in Table~\ref{tab-dd-block}.}
    \[ \small
    \begin{array}{l@{\quad}l@{}l@{}l@{}l@{}l@{}l@{}l@{}l@{}l@{}l@{}l@{}l@{}l@{}l@{}l@{}l@{}}
        & \lambda_{2}+{} & & & \lambda_{5}+{} & & & \lambda_{8}+{} &
          \lambda_{9}+{} & & \lambda_{11}+{} & \lambda_{12}+{} &
          \lambda_{13}+{} & \lambda_{14}+{} & \lambda_{15}+{} &
          \lambda_{16}+{} & \lambda_{17} \\
        = & & \lambda_{3}+{} & & & \lambda_{6}+{} & & \lambda_{8}+{} &
          & \lambda_{10}+{} & \lambda_{11}+{} & & & & \lambda_{15}+{} &
          & \lambda_{17}  \\
        = & & & \lambda_{4}+{} & & & \lambda_{7}+{} & & \lambda_{9}+{} &
          \lambda_{10}+{} & \lambda_{11}+{} & & & & & \lambda_{16}+{} &
          \lambda_{17}
    \end{array} \]

    This has the effect of intersecting the original projective solution
    space for the 4-block with two new hyperplanes.
    A standard application of the filtered double description method
    \cite{burton10-dd} shows that the resulting polytope has 18 admissible
    vertices, described by the following 18 normal surfaces:
    the original $\s{a}$ with no boundary, and 17 new surfaces%
    \footnote{These are the six surfaces
    $(\s{c}+\s{g},\ \s{d}+\s{f},\ \mathrm{or}\ \s{j}) +
     (\s{b}\ \mathrm{or}\ \s{l})$, the five surfaces
    $\s{c}+\s{d} + (\s{b},\ \s{e},\ \s{l},\ \s{m},\ \mathrm{or}\ \s{n})$,
    and the six surfaces
    $\s{c}+\s{i}$,
    $\s{c}+\s{p}$,
    $\s{d}+\s{h}$,
    $\s{d}+\s{o}$,
    $\s{k}$ and $\s{q}$.}
    all with boundary $\baraw+\bbraw+\bcraw$.
    Within each block $\block_i$, we label these 17 new surfaces
    $\s{v}_{i,1},\ldots,\s{v}_{i,17}$.

    Given the formulation of the matching equations above, it follows
    that the normal
    surfaces in $\path_k$ are described completely by the
    linear combinations
    \[ \rho_{1,1}\, \s{v}_{1,1} + \ldots + \rho_{k,17}\, \s{v}_{k,17}
       + \eta_1 \s{a}_1 + \ldots + \eta_k \s{a}_k, \]
    where each $\rho_{i,j},\eta_i \geq 0$ and where
    $\sum_j \rho_{1,j} = \sum_j \rho_{2,j} = \ldots = \sum_j \rho_{k,j}$.
    The full projective solution space for $\path_k$ therefore has
    $17^k + k$ admissible vertices, corresponding to the
    $k$ surfaces $\s{a}_1,\ldots,\s{a}_k$ and the $17^k$ combinations
    $\s{v}_{1,j_1} + \s{v}_{2,j_2} + \ldots + \s{v}_{k,j_k}$ for
    $j_1,j_2,\ldots,j_k \in \{1,\ldots,17\}$.
\end{proof}

The pathological triangulations $\path_1,\path_2,\ldots$ cover all
sizes of the form $n=4k$.  We can generalise this construction to include
$n=4k+1$, $4k+2$ and $4k+3$ by replacing one of our 4-blocks with a
single ``exceptional'' block.  The general constructions and analyses are
detailed in the full version of this paper, and the results are summarised
in the following theorem.

\begin{theorem} \label{t-worst-cases}
    For every positive $n \neq 1,2,3,5$, there exists a closed 3-manifold
    triangulation of size $n$ whose admissible vertex count is as follows:

    \begin{equation} \label{eqn-worst-cases}
    \begin{array}{ll@{\quad\Longrightarrow\quad}l}
        n = 4k   & (k \geq 1) & \scount = 17^k + k \\
        n = 4k+1 & (k \geq 2) & \scount = 581 \cdot 17^{k-2} + k + 1 \\
        n = 4k+2 & (k \geq 1) & \scount = 69 \cdot 17^{k-1} + k \\
        n = 4k+3 & (k \geq 1) & \scount = 141 \cdot 17^{k-1} + k + 2
    \end{array}
    \end{equation}
\end{theorem}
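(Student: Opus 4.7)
The $n = 4k$ case is already Lemma~\ref{l-worst-4k}, so the task is to cover the residues $n \equiv 1,2,3 \pmod{4}$. The plan is to mimic the 4-block construction with three \emph{exceptional blocks} of sizes $6$, $7$ and $9$ respectively, each built so that (a) it is a 3-ball with boundary in the pillow formation of Figure~\ref{fig-blockbdry}, and (b) its admissible vertex surfaces are pairwise compatible in the quadrilateral sense and are organised around the same boundary curves $\alpha,\beta,\gamma$ as in the 4-block analysis. Replacing exactly one $\block_i$ in $\path_k$ by such an exceptional block then produces a triangulation of the 3-sphere of the required size, by Lemmata~\ref{l-block}--\ref{l-ball-wrap} (the relevant 3-ball/pillow properties being intrinsic, not dependent on which particular block realises them).

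Concretely, I would build one exceptional 6-block $\mathcal{E}_6$ with a single internal vertex, a 7-block $\mathcal{E}_7$ with three internal vertices, and a 9-block $\mathcal{E}_9$ with three internal vertices, and verify in \regina\ that their projective solution spaces contain respectively $69$, $141$ and $581$ admissible vertex surfaces whose boundary has the form $\mu(\alpha+\beta+\gamma)$ for some $\mu\geq 0$, in addition to one small sphere around each internal vertex. These counts determine the three leading constants in~(\ref{eqn-worst-cases}). The residues are then covered by taking $k-1$ ordinary 4-blocks together with $\mathcal{E}_6$ (giving $n=4k+2$), with $\mathcal{E}_7$ (giving $n=4k+3$), or $k-2$ ordinary 4-blocks together with $\mathcal{E}_9$ (giving $n=4k+1$, which forces $k\geq 2$); the exceptional block is glued into the cyclic chain of Figure~\ref{fig-path} in exactly the orientation-preserving fashion used there.

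The counting argument is then a direct reprise of the proof of Lemma~\ref{l-worst-4k}. Each ordinary 4-block still contributes $17$ boundary-matching vertex surfaces and one vertex sphere, while the exceptional block contributes $69$, $141$ or $581$ boundary-matching vertex surfaces and $1$, $3$ or $3$ vertex spheres respectively. The matching equations for the full cycle again reduce to the statement that the boundary of every block's contribution has the common form $\mu(\alpha_i+\beta_i+\gamma_i)$, so admissible vertices of $\path$ with nonempty boundary correspond bijectively to tuples choosing one boundary-matching vertex from each block. This gives $69\cdot 17^{k-1}$, $141\cdot 17^{k-1}$ and $581\cdot 17^{k-2}$ respectively; adding the total number of internal vertices ($k$, $k+2$ and $k+1$) yields the formulas in~(\ref{eqn-worst-cases}).

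The main obstacle is the verification that $\mathcal{E}_6$, $\mathcal{E}_7$, $\mathcal{E}_9$ actually exist with the claimed properties. In particular, one must design each exceptional block so that the combined list of vertex normal surfaces across the whole triangulation uses at most one quadrilateral type per tetrahedron (the unusual compatibility property that made the 4-block analysis clean), so that the quadrilateral constraints can again be ignored. This is a finite computational search: for each candidate exceptional triangulation, run \regina\ to enumerate vertex normal surfaces, check the quadrilateral compatibility, impose the two boundary-matching hyperplanes as in Lemma~\ref{l-worst-4k}, and confirm that exactly $69$, $141$ or $581$ admissible vertices survive. The full version of the paper is the natural place to record the explicit gluing tables for $\mathcal{E}_6$, $\mathcal{E}_7$ and $\mathcal{E}_9$ and the corresponding \regina\ verifications.
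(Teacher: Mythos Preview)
Your proposal is correct and follows essentially the same approach as the paper: the paper likewise proves the $n=4k$ case via Lemma~\ref{l-worst-4k} and handles the other residues by ``replacing one of our 4-blocks with a single `exceptional' block'', with the detailed constructions and \regina\ verifications deferred to the full version. Your choice of exceptional block sizes $6$, $7$ and $9$ (with $1$, $3$ and $3$ internal vertices respectively) is exactly what the additive constants $k$, $k+2$, $k+1$ and the powers $17^{k-1}$, $17^{k-1}$, $17^{k-2}$ in equation~(\ref{eqn-worst-cases}) force, so your reconstruction matches the intended argument.
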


Lemma~\ref{l-worst-4k} proves this result for the first
case $n=4k$.  For an extra measure of verification,
equation~(\ref{eqn-worst-cases}) has been confirmed numerically for
all $n \leq 14$ by building the relevant triangulations and using
{\regina} to enumerate all vertex normal surfaces.

The main result of this section is the following limit on any upper
bound for $\scount$, which follows immediately from
Theorem~\ref{t-worst-cases}.  Moreover, as we discover in the following
section, there is reason to believe that this may in fact
give the tightest possible asymptotic bound.

\begin{corollary}
    Any upper bound for the admissible vertex count $\scount$
    must grow at a rate of at least $\Omega(17^{n/4}) \simeq \Omega(2.03^n)$.
\end{corollary}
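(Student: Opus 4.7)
The plan is to derive the corollary as an immediate consequence of Theorem~\ref{t-worst-cases}: because the theorem explicitly exhibits, for each size $n$ (outside a finite set of exceptions), a triangulation whose admissible vertex count grows like $17^{n/4}$, any function that dominates $\scount$ over all size-$n$ triangulations is forced to grow at least this fast. Formally, suppose $U \colon \mathbb{N} \to \mathbb{R}$ is any upper bound, meaning $\scount(\tri) \leq U(n)$ for every closed 3-manifold triangulation $\tri$ of size $n$. Then for each $n \notin \{1,2,3,5\}$, Theorem~\ref{t-worst-cases} yields a specific triangulation witnessing $U(n) \geq g(n)$, where $g(n)$ is the expression on the right-hand side of equation~(\ref{eqn-worst-cases}) appropriate to $n \bmod 4$.

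The remaining step is to check that each of the four cases of $g(n)$ lies in $\Omega(17^{n/4})$ with the \emph{same} constant (up to a uniform minimum). Writing $n = 4k + r$ with $r \in \{0,1,2,3\}$, we have $17^{n/4} = 17^{r/4} \cdot 17^{k}$, and the four leading terms of $g(n)$ are $17^{k}$, $(581/289) \cdot 17^{k}$, $(69/17) \cdot 17^{k}$, and $(141/17) \cdot 17^{k}$ respectively. Dividing each by $17^{r/4}$ produces four positive constants; taking $c$ to be the minimum of these four constants (divided by $2$, say, to absorb the lower-order additive terms $k, k+1, k+2$) gives a single positive $c$ with $U(n) \geq c \cdot 17^{n/4}$ for all sufficiently large $n$. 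This is exactly the statement $U(n) \in \Omega(17^{n/4})$, and the numerical approximation $17^{1/4} \simeq 2.03$ gives the second form.

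There is really no obstacle here beyond bookkeeping: all the work has already been done in constructing the pathological family and proving Lemma~\ref{l-worst-4k} (together with its three analogues in the full version). The one thing to be careful about is handling all four residues mod~$4$ uniformly, so that the lower bound applies to every large $n$ and not just to multiples of $4$; the finitely many exceptional sizes $n \in \{1,2,3,5\}$ are irrelevant to an asymptotic statement and can be ignored.
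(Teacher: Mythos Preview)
Your proposal is correct and matches the paper's approach: the paper simply states that the corollary ``follows immediately from Theorem~\ref{t-worst-cases}'' without further argument, and you have filled in the routine bookkeeping. One minor remark: the additive terms $k$, $k+1$, $k+2$ in equation~(\ref{eqn-worst-cases}) are positive, so they only increase $g(n)$; there is no need to ``absorb'' them by halving the constant.
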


%%%%%%%%%%%%%%%%%%%%%%%%%%%%%%%%%%%%%%%%%%%%%%%%%%%%%%%%%%%%%%%%%%%%%%%%
%
%   Section:  Practical Growth
%
%%%%%%%%%%%%%%%%%%%%%%%%%%%%%%%%%%%%%%%%%%%%%%%%%%%%%%%%%%%%%%%%%%%%%%%%

\section{Practical Growth} \label{s-practice}

We turn now to a comprehensive study of the admissible vertex count
$\scount$ for real 3-manifold triangulations.  The basis of this study
is a complete census of \emph{all} closed 3-manifold triangulations
of size $n \leq 9$.  This is a significant undertaking, and such a
census has never been compiled before; the paper \cite{burton07-nor10}
details some of the sophisticated algorithms involved.

The result is a collection of $149\,676\,922$ triangulations, each
counted once up to \emph{isomorphism} (a relabelling of tetrahedra
and their vertices).  It is worth noting that within this large collection of
triangulations there is a much smaller number of distinct 3-manifolds,
as indicated by the 3-manifold census data of Martelli and Petronio
\cite{martelli01-or9} and the author \cite{burton07-nor10}.

For each of these $\sim 150$ million triangulations we enumerate all
vertex normal surfaces using the algorithms described in
\cite{burton09-convert,burton10-dd}.
The resulting admissible vertex counts $\scount$ are summarised in
Table~\ref{tab-stats}.  All computations were performed using the
software package {\regina} \cite{regina,burton04-regina}.

\begin{table}[htb]
    \centering
    \small
    % I want the final four columns to have equal width.
    % Use p{...} to fix the column width, and then use \hfill in each
    % individual cell to right-justify the numbers.
    \newlength{\statwidth}
    \settowidth{\statwidth}{Std dev}
    \begin{tabular}{r|r|p{\statwidth}|p{\statwidth}|p{\statwidth}|p{\statwidth}}
    \multicolumn{1}{c|}{Number of} & \multicolumn{1}{c|}{Number of} &
    \multicolumn{4}{c}{Admissible vertex count ($\scount$)} \\
    \multicolumn{1}{c|}{tetrahedra ($n$)} &
    \multicolumn{1}{c|}{triangulations} &
    \multicolumn{1}{c|}{Mean} & \multicolumn{1}{c|}{Std dev} &
    \multicolumn{1}{c|}{Min} & \multicolumn{1}{c}{Max} \\
    \hline
    1 &             4 & \hfill  2.00 & \hfill  0.71 & \hfill 1 & \hfill   3 \\
    2 &            17 & \hfill  3.94 & \hfill  1.39 & \hfill 2 & \hfill   7 \\
    3 &            81 & \hfill  5.49 & \hfill  1.97 & \hfill 2 & \hfill  11 \\
    4 &           577 & \hfill  8.80 & \hfill  3.38 & \hfill 2 & \hfill  18 \\
    5 &        5\,184 & \hfill 13.34 & \hfill  5.49 & \hfill 4 & \hfill  36 \\
    6 &       57\,753 & \hfill 20.76 & \hfill  9.21 & \hfill 4 & \hfill  70 \\
    7 &      722\,765 & \hfill 32.17 & \hfill 15.29 & \hfill 4 & \hfill 144 \\
    8 &   9\,787\,509 & \hfill 50.20 & \hfill 25.52 & \hfill 4 & \hfill 291 \\
    9 & 139\,103\,032 & \hfill 78.49 & \hfill 42.51 & \hfill 4 & \hfill 584
    \end{tabular}
    \caption{Summary of admissible vertex counts for all triangulations
        ($n \leq 9$)}
    \label{tab-stats}
\end{table}

The figures that we see are remarkably small.  For $n=9$ tetrahedra,
although Theorem~\ref{t-ubound} places the theoretical bound at
$\simeq O(29^n)$, we have just $584$ vertex normal surfaces in the worst
case.  The mean admissible vertex count for $n=9$ is much smaller again,
evaluated at just $78.49$.  The full distribution of all admissible vertex
counts for $n=9$ is shown in the left-hand graph of Figure~\ref{fig-graphs}.

\begin{figure}[htb]
    \centering
    \includegraphics[scale=0.5]{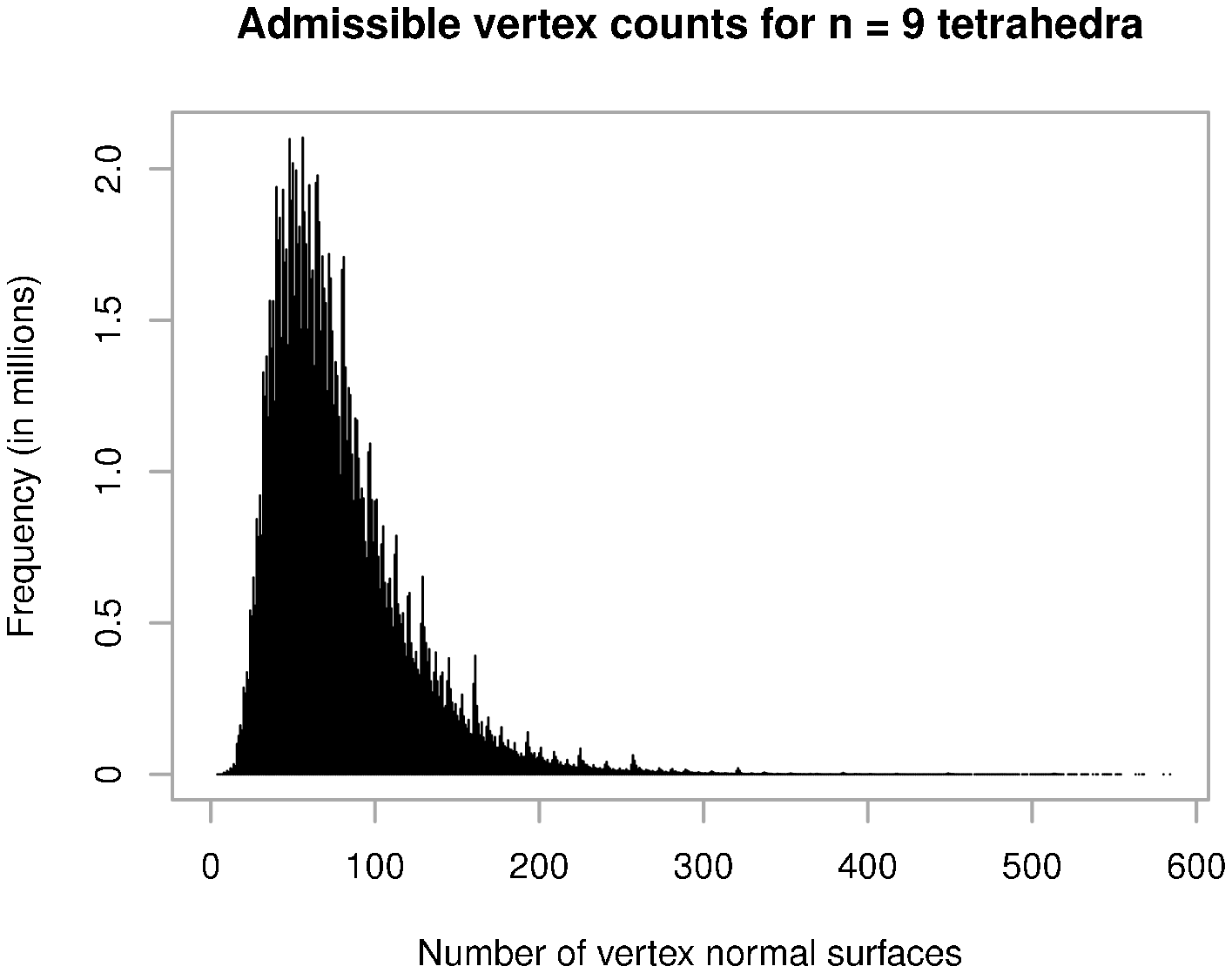}
    \qquad
    \includegraphics[scale=0.5]{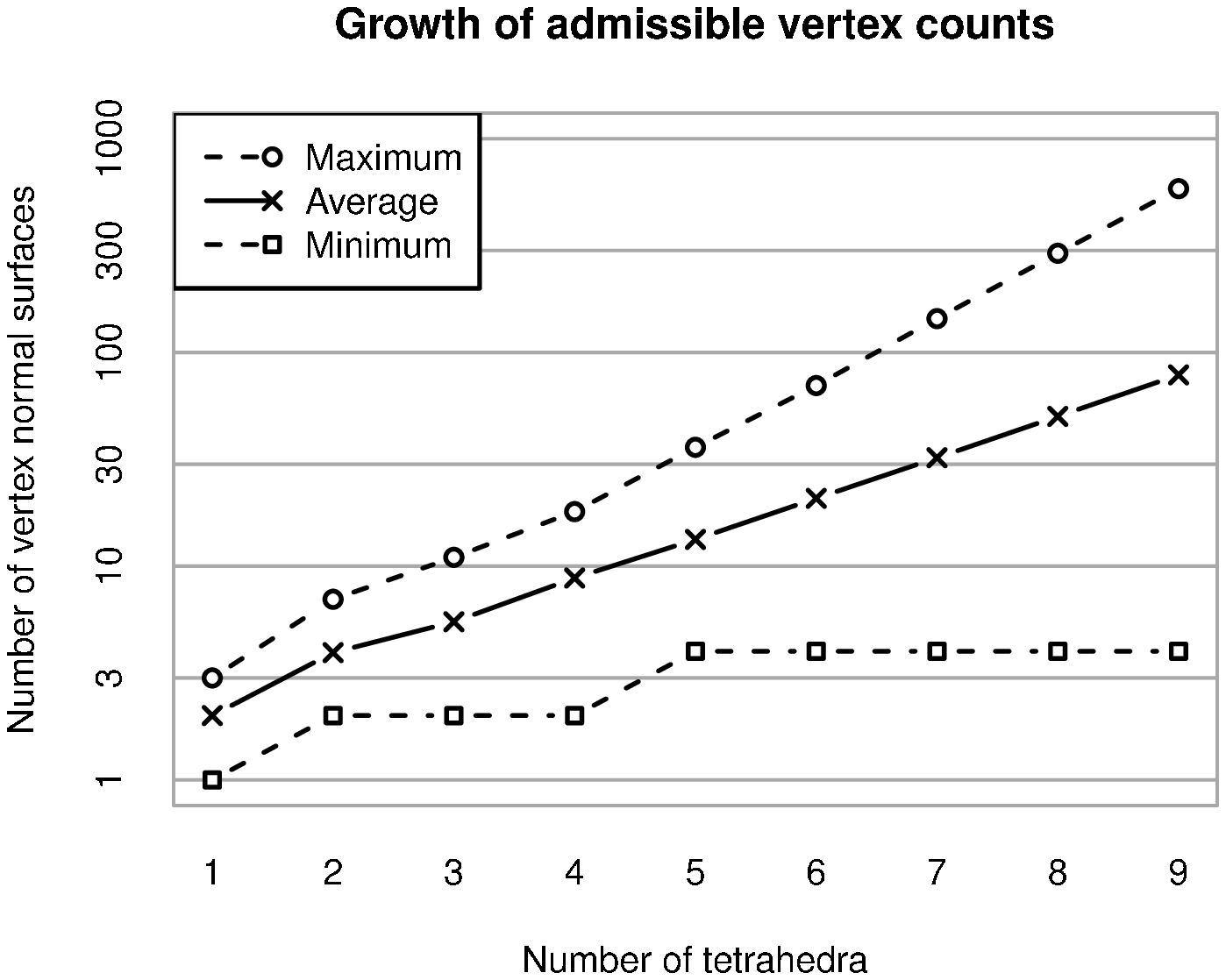}
    \caption{Aggregate results for admissible vertex counts}
    \label{fig-graphs}
\end{figure}

Indeed, our pathological triangulations $\path_1,\path_2$
are the worst cases for $n=4,8$ respectively, giving the maximum
observed values of $\scount = 17^1+1=18$ and $\scount = 17^2+2=291$.
More generally, the pathological triangulations
of Theorem~\ref{t-worst-cases} give the maximum cases in our census
wherever they are defined (i.e., $n \neq 1,2,3,5$).
This leads us to the following general conjecture:

\begin{conjecture} \label{cj-worst}
    For every positive $n \neq 1,2,3,5$, equation~(\ref{eqn-worst-cases})
    gives a tight upper bound on the admissible vertex count $\scount$.
    As a consequence, we have $\scount \oin O(17^{n/4})$.
\end{conjecture}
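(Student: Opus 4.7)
The plan is to prove the upper bound $\scount \oin O(17^{n/4})$ by a structural analysis that parallels, in reverse, the construction used for the pathological family $\path_k$. The basic observation is that any admissible vertex is uniquely determined by its \emph{support pattern}---which of the seven coordinates per tetrahedron are nonzero---so it suffices to bound the number of support patterns that can simultaneously satisfy the quadrilateral constraints, lie on an extreme ray of the cone cut out by the matching equations and non-negativity, and be induced by some admissible vector.

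First I would refine Theorem~\ref{t-ubound} by exploiting admissibility. Partition the admissible vertices according to which quadrilateral type (if any) is active in each tetrahedron; this yields at most $4^n$ classes, and within each class the vertices lie on a face of the projective solution space of strictly lower dimension, allowing a separate application of McMullen's bound. Although this refinement alone does not reach $17^{n/4}$, it sets the stage for a finer tetrahedron-by-tetrahedron amortisation. The next step is to group tetrahedra into sub-triangulations of size four and show that any such group supports at most $17$ locally compatible admissible patterns, matching exactly the 4-block tally of Lemma~\ref{l-worst-4k}. Combined with a gluing lemma showing that these local counts multiply only to the extent permitted by the matching equations, this would yield the $17^{n/4}$ bound and, via a case analysis on $n \bmod 4$, the sharper constants of equation~(\ref{eqn-worst-cases}).

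The main obstacle is the non-local character of admissibility. Whether a local pattern on a group of tetrahedra extends to an admissible vertex of the full projective solution space depends on every other tetrahedron through the matching equations, so a naive decomposition argument tends to overcount drastically: many locally admissible patterns are globally inconsistent, and conversely the local count of $17$ inside a 4-block is only achieved when the surrounding triangulation imposes no further constraint. The pathological triangulations $\path_k$ are precisely those where local and global admissibility agree maximally; the essential difficulty is proving that no other choice of face gluings can do better. Closing this gap will likely require a delicate invariant---perhaps the rank of the matching-equation system restricted to each partial sub-triangulation, together with a tracking of boundary-curve degrees of freedom analogous to the single parameter $\mu$ appearing in the proof of Lemma~\ref{l-worst-4k}---and this is the step where I expect the argument to become genuinely hard.
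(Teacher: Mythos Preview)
The statement you are attempting to prove is stated in the paper as a \emph{conjecture}, not a theorem: the paper offers no proof beyond the computational verification for $n \leq 9$ recorded in Table~\ref{tab-stats}. So there is no paper proof to compare against, and your proposal is an attack on an open problem. You are candid about this yourself, ending with ``this is the step where I expect the argument to become genuinely hard.''

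That honesty is well placed, but the specific decomposition strategy has a concrete flaw earlier than the non-locality obstacle you flag. The count of $17$ in Lemma~\ref{l-worst-4k} is a property of one particular boundary triangulation of the $3$-ball---the $4$-block of Figure~\ref{fig-block}---and arises from a direct enumeration of its admissible vertices. It is not a bound that holds for an arbitrary group of four tetrahedra inside a closed triangulation. Different $4$-tetrahedron sub-triangulations can have many more admissible vertices (indeed, even a single tetrahedron contributes a factor of up to $4$ from the quadrilateral choice alone, and the matching equations internal to a generic $4$-tetrahedron chunk need not cut this down to $17$). So the step ``group tetrahedra into sub-triangulations of size four and show that any such group supports at most $17$ locally compatible admissible patterns'' is not just hard to combine globally; it is false locally without additional hypotheses on the gluing pattern. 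Any successful argument would first have to explain \emph{why} the $4$-block is the worst local configuration, which is essentially the content of the conjecture itself.
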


The growth rate of $\scount$ for $n=1,\ldots,9$ is illustrated in the
right-hand graph of
Figure~\ref{fig-graphs} (note that the vertical axis is plotted on a log
scale).  The growth rate of the maximum $\scount$ is roughly
$17^{n/4} \simeq 2.03^n$ as suggested above; the growth rate of the average
$\smeanraw$ is in the range $1.5^n$ to $1.6^n$.  This is just
below the Fibonacci growth rate of $\phi^n \simeq 1.62^n$.
Indeed, if we let $\smean{n}$ denote the mean admissible vertex count
amongst all triangulations of size $n$, we find that
$\smean{n} < \smean{n-1} + \smean{n-2}$ throughout our census.
This leads us to our next general conjecture:

\begin{conjecture} \label{cj-mean}
    For every $n \geq 3$, the mean admissible vertex count $\smean{n}$
    satisfies the relation $\smean{n} < \smean{n-1} + \smean{n-2}$.
    As a consequence, $\smean{n}$ is bounded above by $O(\phi^n)$
    where $\phi = (1+\sqrt{5})/2$.
\end{conjecture}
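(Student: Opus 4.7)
The consequence that $\smean{n} \in O(\phi^n)$ is an easy induction: taking $C = \max\{\smean{1}, \smean{2}/\phi\}$ and using $\phi^n = \phi^{n-1} + \phi^{n-2}$, strong induction on the recurrence immediately yields $\smean{n} \leq C\phi^n$. So the whole content of the conjecture sits in the Fibonacci-type inequality on means itself, and that is what my plan would target.

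My approach would be to rewrite the inequality in purely combinatorial form. Writing $T_n$ for the number of closed 3-manifold triangulations of size $n$ and $V_n = \sum_\tri \scount(\tri)$ for the total admissible vertex count, the claim $\smean{n} < \smean{n-1} + \smean{n-2}$ is equivalent to $V_n\,T_{n-1} T_{n-2} < T_n(V_{n-1} T_{n-2} + V_{n-2} T_{n-1})$. I would then try to exhibit a multi-valued map from pairs $(\tri,\s{v})$, where $\tri$ is a size-$n$ triangulation and $\s{v}$ a vertex normal surface, into pairs of the same type over triangulations of sizes $n-1$ and $n-2$, weighted so that this inequality falls out on comparing totals. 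The natural candidate reductions are $3$-$2$ Pachner moves and iterated $3$-$2$ moves, which step $n$ down by $1$ and $2$ respectively; for each such reduction I would track how $\s{v}$ descends to an admissible surface in the smaller triangulation, and argue that the image is generically a vertex normal surface. An alternative would be to group size-$n$ triangulations into equivalence classes according to a canonical ``last tetrahedron'' and relate the average $\scount$ inside each class to averages in the classes of size $n-1$ and $n-2$.

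The main obstacle, and the reason this is only a conjecture, is that vertex normal surfaces do not behave cleanly under local triangulation surgery. Performing a $3$-$2$ move can destroy admissibility (by forcing two incompatible quadrilateral types into the same tetrahedron) or send a vertex of the projective solution space into the relative interior of a higher-dimensional face of the reduced polytope, and one must also decide canonically which edge to reduce when many choices exist. Overcoming this would require either a very robust canonical reduction or an averaging argument that charges the degenerate cases against the positive slack visible in the census data. Even if such a correspondence is set up, the asymptotics of $T_n$ itself are not well understood, so a final estimate of the error term against the threshold $\smean{n-1}+\smean{n-2}$ will be delicate; I would expect this last step to be where the argument either succeeds by a small multiplicative margin or breaks down altogether.
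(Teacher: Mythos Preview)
This statement is a \emph{conjecture} in the paper, not a theorem; the paper offers no proof beyond the computational verification for $n \leq 9$ recorded in Table~\ref{tab-stats} and the subsequent theorem. So there is nothing to compare your proposal against: the paper simply observes the inequality $\smean{n} < \smean{n-1} + \smean{n-2}$ empirically in the census data and elevates it to a conjecture.

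Your first paragraph is correct and is the only part the paper implicitly claims: once the recurrence is assumed, the $O(\phi^n)$ bound is an immediate induction. Your remaining paragraphs are a research outline rather than a proof, and you are candid about this. The Pachner-move approach you sketch is reasonable as a direction, but you have already identified the real obstruction: vertex normal surfaces are extremal rays of a polytope defined globally by the matching equations, and a local $3$--$2$ move changes that polytope in ways that need not send vertices to vertices. There is no known structural result that controls how $\scount$ changes under bistellar moves, and the paper does not hint at one. Your alternative of grouping triangulations by a canonical last tetrahedron faces the same difficulty, compounded by the fact that the growth of the triangulation count $T_n$ itself is not understood. In short, your proposal is an honest plan of attack on an open problem, not a proof, and the paper makes no stronger claim than you do.
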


In particular, our census analysis gives us the following computational result:

\begin{theorem}
    Conjectures~\ref{cj-worst} and \ref{cj-mean} are true for $n \leq 9$.
\end{theorem}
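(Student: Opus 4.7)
The plan is to reduce the theorem to direct computational verification, exploiting the census infrastructure already in place. Since Conjectures \ref{cj-worst} and \ref{cj-mean} are logically independent, I would treat them in separate steps.

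For Conjecture \ref{cj-worst}, I would first compute the right-hand side of equation~(\ref{eqn-worst-cases}) for each $n \in \{4,6,7,8,9\}$ --- the only values with $n \leq 9$ for which the formula is defined --- and compare against the Max column of Table~\ref{tab-stats} (e.g.\ $17^2 + 2 = 291$ for $n=8$ and $581 + 3 = 584$ for $n=9$). Attainment of the bound follows from Theorem~\ref{t-worst-cases}, which exhibits explicit pathological triangulations whose $\scount$ equals the stated value. Non-exceedance is immediate from the completeness of the census: since the enumeration of \cite{burton07-nor10} produces every closed 3-manifold triangulation of size $n \leq 9$ up to isomorphism, the recorded maximum is the true maximum.

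For Conjecture \ref{cj-mean}, the verification reduces to reading the Mean column of Table~\ref{tab-stats} and checking the seven Fibonacci-type inequalities $\smean{n} < \smean{n-1} + \smean{n-2}$ for $n = 3, \ldots, 9$. The tightest case is $\smean{9} \simeq 78.49 < \smean{8} + \smean{7} \simeq 82.37$; the stored means carry enough precision that rounding poses no danger.

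The real difficulty lies not in these checks but in trusting the underlying data. Two independent pieces must be verified at scale: the census must not omit any isomorphism class, and each of the $\sim 150$ million invocations of vertex enumeration (via the algorithms of \cite{burton09-convert,burton10-dd} as implemented in \regina) must return the correct $\scount$. A single erroneous entry could silently inflate a maximum or shift a mean. To guard against this, I would cross-check the pathological family of Theorem~\ref{t-worst-cases} --- whose values of $\scount$ are established symbolically and confirmed numerically for $n \leq 14$ --- against the corresponding triangulations appearing in the census for each $n \in \{4,6,7,8,9\}$, ensuring that both methods agree on the extremal value.
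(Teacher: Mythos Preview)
Your proposal is correct and matches the paper's approach: the theorem is presented there as a direct computational consequence of the exhaustive census summarised in Table~\ref{tab-stats}, with no separate proof beyond the enumeration itself. Your write-up simply makes explicit the two checks (maximum against equation~(\ref{eqn-worst-cases}), and the seven Fibonacci-type inequalities on the means) that the paper leaves implicit in the surrounding discussion.
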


%%%%%%%%%%%%%%%%%%%%%%%%%%%%%%%%%%%%%%%%%%%%%%%%%%%%%%%%%%%%%%%%%%%%%%%%
%
%   Section:  Conclusions
%
%%%%%%%%%%%%%%%%%%%%%%%%%%%%%%%%%%%%%%%%%%%%%%%%%%%%%%%%%%%%%%%%%%%%%%%%

\section{Conclusions}

We have pushed the theoretical bounds on the admissible
vertex count $\scount$ from both directions, and we have shown through an
exhaustive study of $\sim 150$ million triangulations that
$\scount$ is surprisingly small in practice.  We close with a brief
discussion of the implications of this study.

Most importantly, it suggests that topological algorithms that employ
normal surfaces might not be as infeasible as theory suggests.
Hints of this have already been seen with the quadrilateral-to-standard
conversion algorithm for normal surfaces \cite{burton09-convert}, which
(against theoretical expectations) appears to have a running time
polynomial in its output size.

In many fields, a census for size $n \leq 9$ might not seem large
enough for drawing conclusions and conjectures.  However, there is
evidence elsewhere to suggest that 3-manifold triangulations are
flexible enough for important patterns to establish themselves for
very low $n$.  For example, the papers \cite{burton07-nor7,matveev98-or6}
discuss several combinatorial patterns for $n \leq 6$; these patterns
have later been found to generalise well for larger $n$
\cite{burton07-nor10,martelli01-or9}, and some are now
proven in general \cite{jaco09-minimal-lens,jaco09-coverings}.

Finally, it is clear from this practical study that the theoretical
bounds on $\scount$ still have much room for improvement.  One
possible direction is to incorporate the quadrilateral constraints
directly into McMullen's theorem.  This is difficult because the
quadrilateral constraints break convexity, but the outcome may be
significantly closer to the $O(17^{n/4})$ that we see in practice.

%%%%%%%%%%%%%%%%%%%%%%%%%%%%%%%%%%%%%%%%%%%%%%%%%%%%%%%%%%%%%%%%%%%%%%%%
%
%   Acknowledgements
%
%%%%%%%%%%%%%%%%%%%%%%%%%%%%%%%%%%%%%%%%%%%%%%%%%%%%%%%%%%%%%%%%%%%%%%%%

\section*{Acknowledgements}

The author is grateful to both the University of Victoria (Canada)
and the Victorian Partnership for Advanced Computing (Australia)
for the use of their excellent computing resources,
and to the Australian Research Council for their support
under the Discovery Projects funding scheme (project DP1094516).

%%%%%%%%%%%%%%%%%%%%%%%%%%%%%%%%%%%%%%%%%%%%%%%%%%%%%%%%%%%%%%%%%%%%%%%%
%
%   Bibliography
%
%%%%%%%%%%%%%%%%%%%%%%%%%%%%%%%%%%%%%%%%%%%%%%%%%%%%%%%%%%%%%%%%%%%%%%%%

\small
\bibliographystyle{amsplain}
\bibliography{pure}

%%%%%%%%%%%%%%%%%%%%%%%%%%%%%%%%%%%%%%%%%%%%%%%%%%%%%%%%%%%%%%%%%%%%%%%%
%
%   Author contact and affiliation
%
%%%%%%%%%%%%%%%%%%%%%%%%%%%%%%%%%%%%%%%%%%%%%%%%%%%%%%%%%%%%%%%%%%%%%%%%

\bigskip
\smallskip
\noindent
Benjamin A.~Burton \\
School of Mathematics and Physics, The University of Queensland \\
Brisbane QLD 4072, Australia \\
(bab@maths.uq.edu.au)

\end{document}